\definecolor{webred}{rgb}{0.5,0,0}
\definecolor{webblue}{rgb}{0,0,0.8}
\definecolor{Red}{rgb}{1.0,0.8,0.8}
\newif\ifarxiv
\begin{document}
%
\title{Neural Network Compression of ACAS Xu Early Prototype is Unsafe: Closed-Loop Verification through 
Quantized State Backreachability}
\titlerunning{Neural Network Compression of ACAS Xu Early Prototype is Unsafe}
%
\author{Stanley Bak\inst{1} \and Hoang-Dung Tran\inst{2}}
\authorrunning{S. Bak and H.D. Tran}
%
\institute{Stony Brook University, Stony Brook, NY, USA \\
\email{stanley.bak@stonybrook.edu}\\
\and
University of Nebraska-Lincoln, Lincoln, NE, USA \\
\email{dtran30@unl.edu}}
\maketitle              
\begin{abstract}
ACAS Xu is an air-to-air collision avoidance system designed for unmanned aircraft that issues horizontal turn advisories to avoid an intruder aircraft.
Due the use of a large lookup table in the design, a neural network compression of the policy was proposed. 
Analysis of this system has spurred a significant body of research in the formal methods community on neural network verification.
While many powerful methods have been developed, most work focuses on open-loop properties of the networks, rather than the main point of the system---collision avoidance---which requires closed-loop analysis.

\vspace{1em}

In this work, we develop a technique to verify a closed-loop approximation of the system using \emph{state quantization} and \emph{backreachability}.
We use favorable assumptions for the analysis---perfect sensor information, instant following of advisories, ideal aircraft maneuvers and an intruder that only flies straight.
When the method fails to prove the system is safe, we refine the quantization parameters until generating counterexamples where the original (non-quantized) system also has collisions.
%
\keywords{Neural Network Verification \and ACAS Xu \and Reachability}
\end{abstract}
\section{Introduction}

The Airborne Collision Avoidance System X (ACAS X) is a mid-air collision avoidance system under development~\cite{olson2015airborne}, with the ACAS Xu variant focused on collision avoidance for unmanned aircraft~\cite{katz2017reluplex}.
Originally designed offline using dynamic programming and Markov decision processes (MDPs)~\cite{kochenderfer2011robust}, the large rule table was compressed by a factor of 1000 using a set of neural networks~\cite{julian2016policy}.
The proposed system is an example of a neural network control system (NNCS), where the system's execution alternates between the aircraft dynamics and a neural network controller.
As collision avoidance is safety-critical, analysis of the neural networks has spurred a significant body of research on neural network verification.
%
Most existing work, however, focuses on \emph{open-loop} verification, such as property $\phi_3$ from the original work~\cite{katz2017reluplex}, which states, ``if the intruder is directly ahead and is moving towards the ownship, [a turn will be commanded].''
Open-loop properties can be expressed in terms of constraints over the inputs and outputs of a single execution of the neural network.
However, satisfying open-loop properties does not prove the system is safe, as this requires reasoning with the physical system dynamics---how the aircraft responds to turn commands.
Also, the system is running continuously and may change advisories at a future time, complicating safety analysis.
%
%
Verification of closed-loop safety of provided collision avoidance system under all designed operating conditions is thus a sort of grand challenge.
%

While verification of neural networks is continuously improving, an intriguing alternate approach has recently been proposed based on input quantization~\cite{jia2021verifying}.
Rather than verifying the neural network directly, which requires reasoning about the semantics at each layer, the system's execution semantics are changed to round the inputs to a discrete set of possible values before running the network.
%
To be clear, this type of quantization is a preprocessing layer before the network runs; it does not change the representation of the floating-point values inside the network itself.
Through input quantization, proving open-loop properties of a neural network is reduced to the problem of \emph{network execution} for each of a finite set of possible inputs.
Due to the possibility of combinatorial explosion, this strategy can only work if the number of inputs is small, which is often the case for neural networks used in control systems.
When the strategy is applicable, however, it enjoys several advantages: 
(i) batch execution of neural networks is often used in training and so optimized hardware like GPUs can be leveraged to enumerate the possible inputs for verification, 
(ii) the performance of the final quantized system approximates the performance of the original neural network and the approximation can be tuned through the quantization parameters, and %
(iii) the verification method only requires execution, and works regardless of the network size, the network architecture, or the layer types, unlike most neural network verification methods.
In the context of verification, however, quantization has only been considered for open-loop properties.

In this work, we propose an approach to formally verify quantized closed-loop NNCS.
Although the technique is general, we focus primarily on proving safety for quantized version of the well-studied aircraft collision avoidance neural network benchmark.
Two key ideas are needed to make this work: (1) we perform \emph{state quantization} rather than input quantization and (2) we use \emph{backreachability} from the unsafe states to reduce the number of partitions.
We prove the approach is sound and complete, in the sense that by continuing to refine quantization parameters, either the quantized system will eventually be proven safe or an unsafe counterexample will be found in the original system.
When the method fails to prove safety of quantized closed-loop system, we refine the quantization values until discovering cases where the original (unquantized) version of the system fails.
We also show that with stricter assumptions on the ownship aircraft's velocity, the quantized system can guarantee safety.

\section{Background and Problem Formulation}
We next review key aspects of the system design, proof assumptions, and provide background on $\mathcal{AH}$-Polytopes before formulating the safety verification problem.

\subsection{Collision Avoidance System Design} We are interested in safety verification and falsification of the \emph{closed-loop} air-to-air collision avoidance system~\cite{kochenderfer2011robust,katz2017reluplex} depicted in Figure~\ref{fig:closed-loop-ACASXu}. 
The system computes advisory commands to control an ownship aircraft with physical dynamics described by a set of ordinary differential equations (ODEs), trying to avoid collisions with a nearby intruder. 

\begin{figure}[t]
    \centering
    \includegraphics[width=0.99\columnwidth]{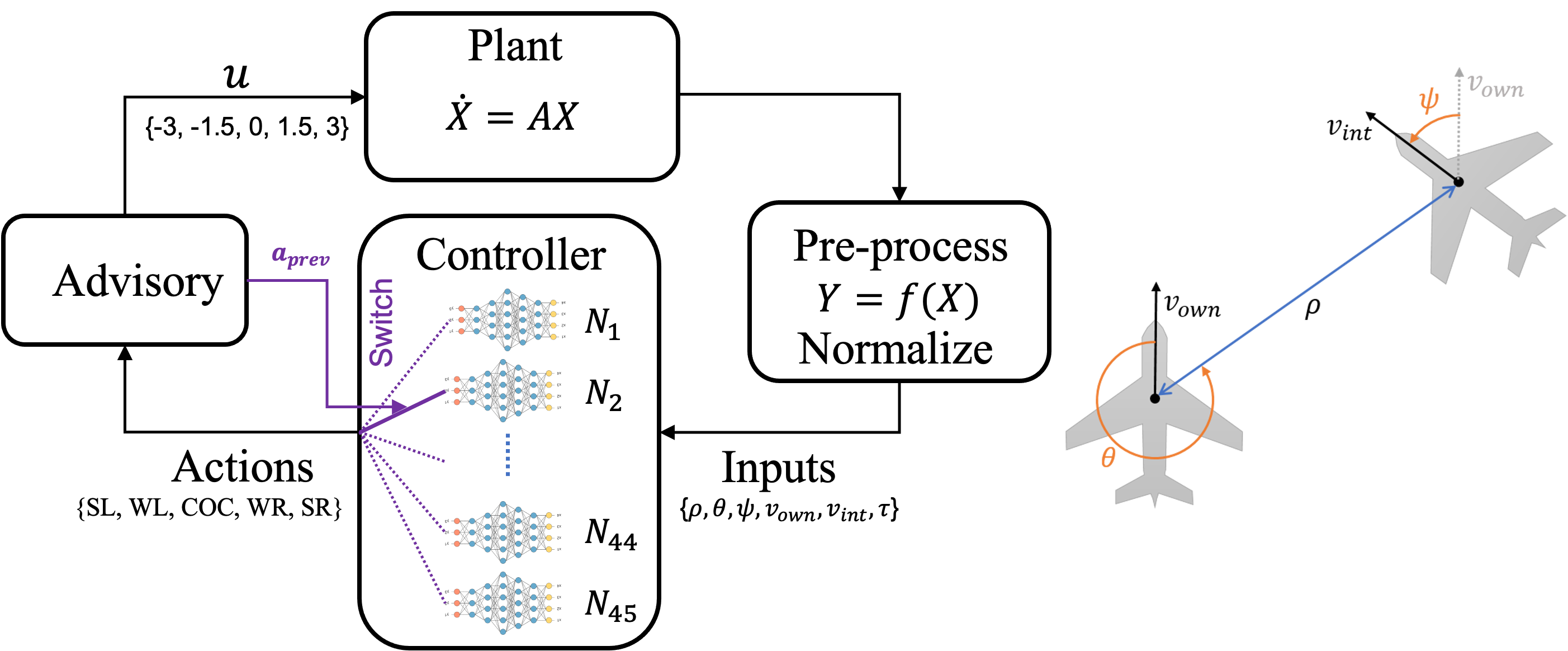}
    \caption{The closed-loop air-to-air collision avoidance system design.}
    \label{fig:closed-loop-ACASXu}
\end{figure}

A detailed description of the inputs and actions in the system is shown in Table \ref{tab:acasxu-inputs-actions}.
The system receives 7 inputs about the state of an ownship and a nearby intruder aircraft, $\mathcal{I} = \{\rho, \theta, \psi, v_{own}, v_{int}, \tau, a_{prev}\}$, and produces one of five possible advisories for the ownship, $\mathcal{A} = \{\textsc{coc, wl, wr, sl, sr}\}$.

The turn advisories in the system are generated by 45 deep ReLU neural networks with 6 layers and 50 neurons per layer for each network. 
Control switches between different neural networks $N_{a_{prev},\tau}$ based on the previous advisory $a_{prev}$ (total of 5 choices) and the time until loss of vertical separation $\tau = \{0, 1, 5, 10, 20, 50, 60, 80, 100\}$ (total of 9 choices). 
For example, the network $N_{5,3}$ will be invoked if the previous advisory is $a_{prev} = \textsc{sr}$ and $\tau = 5$. 
If the ownship and the intruder are at the same altitude, then $\tau = 0$ and only five neural network controllers need to be used, $N_{1,1}, N_{2,1}, N_{3,1}, N_{4,1}$, and $N_{5,1}$. 

\begin{table}[b]
\centering
  \resizebox{\linewidth}{!}{
\begin{tabular}{lll|ll}
\hline
\multicolumn{1}{c}{\bf Input} & \multicolumn{1}{c}{\bf Units} & \multicolumn{1}{c}{\bf Description} & \multicolumn{1}{|c}{\bf Action} & \multicolumn{1}{c}{\bf Description} \\ \hline
$\rho$ & $ft$  & distance between ownship and intruder                                &  \textsc{sl} & strong left turn at 3.0 deg/s                               \\
$\theta$ & $rad$ & angle to intruder w.r.t ownship heading ~~                               &  \textsc{wl} & weak left at turn 1.5 deg/s                                \\
$\psi$ & $rad$  & heading of intruder w.r.t ownship                            & \textsc{coc} &    clear of conflict (do nothing)                            \\
$v_{own}$ & $ft/s$ & velocity of ownship                             &  \textsc{wr}  & weak right turn at 1.5 deg/s                               \\
$v_{int}$ & $ft/s$ & velocity of intruder                             &  \textsc{sr}  & strong right turn at 3.0 deg/s                                \\
$\tau$    & $s$ &  time until loss of vertical separation                            &        &                                \\
$a_{prev}$& ~ & previous advisory                            &        &                              \\ \hline
\end{tabular}}
\caption{Input variables used to produce a turn advisory.}
\label{tab:acasxu-inputs-actions}%
\end{table}

\subsection{Assumptions and Plant Model} Before we describe the plant model used in analysis, we first state our system assumptions: (i) the intruder flies in straight-line trajectories with constant speed, (ii) the ownship flies with constant speed and its heading is adjusted every second (the NNCS control period), (iii) the actions correspond to heading changes in the intruder of $1.5$ deg/sec for weak turn commands, $3.0$ deg/sec for strong turns and $0.0$ deg/sec for clear-of-conflict commands~\cite{julian2016policy}, (iv) there is no sensor noise and (v) advisories are followed exactly and immediately.
Many of these are fairly strong and the real system would need to be robust to maneuvering intruders, pilot delay and sensor noise.
From a safety proof perspective, however, we would want the system to \emph{at least} be safe under these ideal assumptions.

To model the state of the system with these assumptions, we use Cartesian coordinates.
The values $x_{own}, y_{own}, x_{int}, y_{int}$ refer to the $x$ and $y$ positions of the ownship and the intruder; $v_{own} = \sqrt{(v^x_{own})^2 + (v^y_{own})^2}$ and $v_{int} = \sqrt{(v^x_{int})^2 + (v^y_{int})^2}$ are the speed of the ownship and the intruder; $\theta_{own}$ and $\theta_{int}$ are the heading of the ownship and the intruder w.r.t the $x$ axis. 
The system performs idealized turn maneuvers modeled with Dubins aircraft dynamics:
\begin{equation}\label{eq:Dubins}
\begin{split}
    &\dot{x}_{own} = v^x_{own} = v_{own} cos(\theta_{own}) \\
    &\dot{y}_{own} = v^y_{own} = v_{own} sin(\theta_{own}) \\
    &\dot{x}_{int} = v^x_{int} = v_{int} cos(\theta_{int}) \\
    &\dot{y}_{int} = v^y_{int} = v_{int} sin(\theta_{int})
\end{split}
\end{equation}
%

Equation \ref{eq:Dubins} does not show clearly how the aircraft can be controlled by changing their heading. 
Taking derivatives of the Equation \ref{eq:Dubins} one more time and noticing that $\dot{\theta}_{own}$ is a constant between advisories, $\dot{\theta}_{own}  = (\pi/180)u = c (rad/s)$, and then taking $\dot{\theta}_{int} = 0$, we obtain the following 8-d linear system dynamics:
%
%

%
%
\begin{equation} \label{eq:Model}
    \begin{bmatrix} \dot{x}_{own} \\ \dot{y}_{own} \\ \dot{v}^x_{own} \\ \dot{v}_{own}^y \\ \dot{x}_{int} \\ \dot{y}_{int} \\ \dot{v}^x_{int} \\ \dot{v}^y_{int} \\ \end{bmatrix} = \begin{bmatrix}
    0 & 0 & 1 & 0 & 0 & 0 & 0 & 0 \\ 0 & 0 & 0 & 1 & 0 & 0 & 0 & 0 \\ 0 & 0 & 0 & -c & 0 & 0 & 0 & 0 \\ 0 & 0 & c & 0 & 0 & 0 & 0 & 0 \\ 0 & 0 & 0 & 0 & 0 & 0 & 1 & 0 \\ 0 & 0 & 0 & 0 & 0 & 0 & 0 & 1 \\ 0 & 0 & 0 & 0 & 0 & 0 & 0 & 0 \\ 0 & 0 & 0 & 0 & 0 & 0 & 0 & 0 \\ 
    \end{bmatrix}
    \begin{bmatrix} {x}_{own} \\ {y}_{own} \\ {v}^x_{own} \\ {v}_{own}^y \\ {x}_{int} \\ {y}_{int} \\ {v}^x_{int} \\ {v}^y_{int} \\ \end{bmatrix} 
\end{equation}
The linear model described in Equation \ref{eq:Model} is valid for only one control step, with a fixed control signal $u$, which may be either $-3, -1.5, 0, 1.5$ or $3$ deg/s depending on the specific command. 
%
%
Therefore, this model can be considered as a piece-wise linear model of the system. From the plant state variables, we can obtain the inputs for the neural network controller which are expected to in radial coordinates as follows. 
\begin{equation}\label{eq:lin_to_nonlinear}
    \begin{split}
        & \theta_{own} = arctan(\frac{v^y_{own}}{v^x_{own}}),~~~\theta_{int} = arctan(\frac{v^y_{int}}{v^x_{int}}), \\
        &\rho = \sqrt{(x_{int} - x_{own})^2 + (y_{int} - y_{own})^2}, \\
        & \theta = arctan(\frac{y_{int} - y_{own}}{x_{int} - x_{own}}) - \theta_{own}, ~~~ \psi = \theta_{int} - \theta_{own}.
    \end{split}
\end{equation}

\subsection{Reachability with $\mathcal{AH}$-Polytopes}
An $\mathcal{AH}$-polytope is a set representation that informally is an affine transformation of a half-space polytope, where the affine transformation and polytope terms are explicitly kept separate.
Although the name is fairly recent~\cite{sadraddini2019linear}, this set representation has often been used in reachability analysis for linear systems~\cite{bak2019hscc,bak17hscc} and neural networks~\cite{tran2019star,bak2020cav}, where it is also called a linear star set~\cite{duggirala2016parsimonious}, constrained zonotope~\cite{scott2016constrained}, affine form~\cite{han2006reachability}, or symbolic orthogonal projection~\cite{hagemann2014reachability}.

Importantly for this work, discrete-time reachability of systems with linear dynamics, $\dot{x} = Ax$, can be expressed exactly using this set representation, as it amounts to a linear transformation of the entire set by the matrix exponential $e^{At}$, where $t$ is the time step.
Further, operations like intersections can be performed exactly on $\mathcal{AH}$-polytopes, as well as linear optimization over the sets.

\begin{definition} [$\mathcal{AH}$-Polytope] \label{def:AH-polytope} An $\mathcal{AH}$-Polytope is a tuple $\Theta = \langle V, c, C, d \rangle$ that represents a set of states as follows:
\begin{equation*}
 \llbracket \Theta \rrbracket = \{x \in \mathbb{R}^n ~ | ~ \exists {\alpha \in \mathbb{R}^m}, ~ x = V \alpha + c \wedge C\alpha \leq d\}.   
\end{equation*}
\end{definition}
%
\begin{proposition}[Affine Mapping] An affine mapping of an $\mathcal{AH}$-Polytope $\Theta = \langle V, c, C, d \rangle$ with a mapping matrix $W$ and an offset vector $b$ is a new $\mathcal{AH}$-Polytope $\Theta^\prime = \langle V^\prime, c^\prime, C^\prime, d^\prime \rangle$ in which $V^\prime = WV,~c^\prime = Wc + b,~C^\prime = C,~d^\prime = d$.
\end{proposition}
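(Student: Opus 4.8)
The plan is to prove the claimed identity, namely that $\llbracket \Theta' \rrbracket$ equals the affine image $\{Wx + b \mid x \in \llbracket \Theta \rrbracket\}$, by a direct double inclusion. The one structural fact that makes this trivial is that in Definition~\ref{def:AH-polytope} the polytope constraints $C\alpha \le d$ restrict the \emph{generator parameter} $\alpha$ and never $x$ itself, so an affine map applied to $x$ cannot interact with them and the constraint data $(C,d)$ can be carried over verbatim.

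First I would show $\{Wx+b \mid x \in \llbracket\Theta\rrbracket\} \subseteq \llbracket\Theta'\rrbracket$. Take $y = Wx + b$ with $x \in \llbracket\Theta\rrbracket$; then there is $\alpha \in \mathbb{R}^m$ with $x = V\alpha + c$ and $C\alpha \le d$. Substituting gives $y = W(V\alpha + c) + b = (WV)\alpha + (Wc + b)$, while the same $\alpha$ still satisfies $C\alpha \le d$. With $V' = WV$, $c' = Wc+b$, $C' = C$, $d' = d$ as in the statement, this $\alpha$ witnesses $y \in \llbracket\Theta'\rrbracket$. For the reverse inclusion, given $y \in \llbracket\Theta'\rrbracket$ pick $\alpha$ with $y = (WV)\alpha + (Wc+b)$ and $C\alpha \le d$, and set $x := V\alpha + c$; then $\alpha$ certifies $x \in \llbracket\Theta\rrbracket$ and $Wx + b = W(V\alpha+c)+b = (WV)\alpha + Wc + b = y$, so $y$ is in the affine image. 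The two inclusions give equality.

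I do not expect any genuine obstacle: each direction is a single substitution, and the only point worth stating explicitly is that the generator parameter $\alpha$ is reused unchanged, which is precisely why $C$ and $d$ are copied without modification. If it seems worthwhile I would also note the trivial bookkeeping that for $W \in \mathbb{R}^{k\times n}$ the new generator matrix $V' = WV$ has shape $k \times m$, so $\Theta'$ represents a subset of $\mathbb{R}^k$, and that no rank or nondegeneracy hypothesis on $W$, $V$, or $C$ is required.
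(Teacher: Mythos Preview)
Your argument is correct: the double inclusion via reusing the same generator parameter $\alpha$ is exactly the right (and essentially the only) way to verify this identity, and your observation that the constraints act on $\alpha$ rather than $x$ is the key point. Note, however, that the paper states this proposition without proof as a standard background fact about $\mathcal{AH}$-polytopes, so there is no proof in the paper to compare against; your write-up simply supplies the routine verification the authors omitted.
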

\begin{proposition}[Linear Transformation] A linear transformation of an $\mathcal{AH}$-Polytope with a matrix $W$ is an affine mapping using mapping matrix $W$ and an offset vector of $b = 0$.
\end{proposition}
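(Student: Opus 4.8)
The plan is to observe that a linear transformation is simply the degenerate case of an affine transformation in which the additive offset vanishes, and then to invoke the Affine Mapping proposition directly. There is no hidden content here: the statement is essentially a corollary of the previous one.

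Concretely, I would first unfold the definitions. For an $\mathcal{AH}$-polytope $\Theta = \langle V, c, C, d \rangle$, its image under a matrix $W$ is $W\llbracket \Theta \rrbracket = \{Wx \mid x \in \llbracket \Theta \rrbracket\}$. Substituting the generator form $x = V\alpha + c$ with $C\alpha \leq d$, every image point can be written as $Wx = WV\alpha + Wc = (WV)\alpha + (Wc + 0)$. This is exactly the set produced by the Affine Mapping proposition applied with mapping matrix $W$ and offset vector $b = 0$, namely the $\mathcal{AH}$-polytope $\langle WV,\, Wc + 0,\, C,\, d \rangle$. Since the constraint block $C\alpha \leq d$ is carried over unchanged, the admissible parameter region is identical, and the two represented sets coincide.

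The only point requiring any care is that the Affine Mapping proposition is used as a black box, so one should confirm that its statement permits $b$ to be the zero vector with no side conditions attached (it does). Consequently, I expect no genuine obstacle, and I would present the argument as a one-line specialization of the preceding proposition, with the equality $Wc + 0 = Wc$ made explicit only for completeness.
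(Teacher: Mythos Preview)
Your proposal is correct and matches the paper's treatment: the paper states this proposition without proof, treating it as an immediate specialization of the Affine Mapping proposition with $b=0$, which is exactly what you do.
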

\begin{proposition}[Intersection] The intersection of $\Theta = \langle V, c, C, d \rangle$ and a half-space $\mathcal{H} = \{x~|~Gx \leq g \}$ is a new $\mathcal{AH}$-Polytope $\Theta^\prime = \langle V^\prime, c^\prime, C^\prime, d^\prime \rangle $ with $c^\prime = c,~V^\prime = V,~C^\prime = [C;GV],~d^\prime = [d;g-Gc]$.
\end{proposition}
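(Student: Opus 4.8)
The plan is to prove the two sets are equal by double inclusion, exploiting the fact that the proposed $\Theta'$ keeps the generator matrix $V$ and center $c$ unchanged, so a point's membership in $\llbracket \Theta \rrbracket$ and in $\llbracket \Theta' \rrbracket$ can be witnessed by the \emph{same} parameter vector $\alpha$. Concretely, I would fix notation: let $\mathcal{S} = \llbracket \Theta \rrbracket \cap \mathcal{H}$ and $\mathcal{S}' = \llbracket \Theta' \rrbracket$, where $C' = [C; GV]$ and $d' = [d; g - Gc]$, so that the constraint $C'\alpha \le d'$ is precisely the conjunction of $C\alpha \le d$ and $GV\alpha \le g - Gc$.

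For the inclusion $\mathcal{S}' \subseteq \mathcal{S}$, I would take $x \in \mathcal{S}'$, obtain $\alpha$ with $x = V\alpha + c$, $C\alpha \le d$, and $GV\alpha \le g - Gc$. The first two conditions give $x \in \llbracket \Theta \rrbracket$ directly from Definition~\ref{def:AH-polytope}. For the third, I would add $Gc$ to both sides to get $G(V\alpha + c) \le g$, i.e.\ $Gx \le g$, so $x \in \mathcal{H}$; hence $x \in \mathcal{S}$. For the reverse inclusion $\mathcal{S} \subseteq \mathcal{S}'$, I would take $x \in \mathcal{S}$; membership in $\llbracket \Theta \rrbracket$ supplies an $\alpha$ with $x = V\alpha + c$ and $C\alpha \le d$, and membership in $\mathcal{H}$ gives $Gx \le g$, which rewrites as $GV\alpha \le g - Gc$ by substituting $x = V\alpha + c$ and subtracting $Gc$. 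Stacking the two inequality systems yields $C'\alpha \le d'$ with the same $\alpha$ and the unchanged $V, c$, so $x \in \mathcal{S}'$.

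There is essentially no hard step here: the argument is a routine unfolding of the $\mathcal{AH}$-polytope semantics together with the elementary equivalence $Gx \le g \iff GV\alpha \le g - Gc$ whenever $x = V\alpha + c$. The only point that warrants a sentence of care is the observation that the existential witness $\alpha$ may be reused across both set descriptions, which is exactly why intersecting with a half-space only appends rows to the constraint block $(C,d)$ and touches neither $V$ nor $c$; I would state this explicitly so the reader sees why no change of coordinates in $\alpha$-space is needed. I would also remark in passing that an analogous stacking works for intersection with a general polytope $\{x \mid Gx \le g\}$ (several half-spaces at once), since the construction is row-wise in $G$ and $g$.
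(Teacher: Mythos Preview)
Your argument is correct: the double-inclusion via the shared witness $\alpha$ is exactly the right unfolding of Definition~\ref{def:AH-polytope}, and the equivalence $Gx \le g \iff GV\alpha \le g - Gc$ when $x = V\alpha + c$ is the only computation needed. Note that the paper states this proposition without proof (it is treated as a standard fact about $\mathcal{AH}$-polytopes), so there is nothing to compare against; your write-up would serve perfectly well as the omitted justification.
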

\begin{proposition}[Linear Optimization] Linear optimization in given a direction $w \in \mathbb{R}^n$ over a star set
$\Theta = \langle V, c, C, d \rangle$
can be solved with linear programming as follows:
$\min(w^T x), ~s.t.~ x \in \Theta = 
w^T c + \min(w^T V \alpha),~s.t.~C\alpha \leq d$.
\end{proposition}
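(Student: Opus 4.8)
The plan is a direct substitution argument using Definition~\ref{def:AH-polytope}. By that definition, a point $x$ lies in $\llbracket \Theta \rrbracket$ if and only if there exists $\alpha \in \mathbb{R}^m$ with $x = V\alpha + c$ and $C\alpha \le d$. First I would rewrite the objective under this parametrization: for any such pair $(x,\alpha)$ we have $w^T x = w^T(V\alpha + c) = w^T c + (w^T V)\,\alpha$, where $w^T c$ is a scalar constant that does not depend on the decision variable.

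Next I would show the two optimization problems have the same optimal value by a two-sided inequality. For the ``$\ge$'' direction, take any feasible $x \in \llbracket \Theta \rrbracket$; by definition it comes with a witness $\alpha$ satisfying $C\alpha \le d$ and $x = V\alpha + c$, and the computation above shows $w^T x = w^T c + (w^T V)\alpha$, so $\min\{w^T x : x \in \llbracket\Theta\rrbracket\} \ge w^T c + \min\{(w^T V)\alpha : C\alpha \le d\}$. For the ``$\le$'' direction, take any $\alpha$ with $C\alpha \le d$ and set $x = V\alpha + c$; then $x \in \llbracket\Theta\rrbracket$ by definition and again the objectives agree, giving the reverse inequality. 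Combining the two yields the stated equality. Finally, the right-hand side $\min (w^T V)\alpha$ subject to $C\alpha \le d$ is, by inspection, a linear program in the variables $\alpha$ (linear objective, polyhedral feasible set), which is exactly the claim.

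The only subtlety worth noting is degeneracy of the parametrization: the affine map $\alpha \mapsto V\alpha + c$ need not be injective, and $\{\alpha : C\alpha \le d\}$ may be unbounded even when $\llbracket\Theta\rrbracket$ is bounded (or vice versa, empty). The equality of optimal values still holds as written --- both sides are simultaneously $-\infty$, or simultaneously a common finite value --- but if one additionally wants the minimum to be \emph{attained}, it suffices to assume $\llbracket\Theta\rrbracket$ is nonempty and the objective is bounded below on it, and then invoke the standard fact that a bounded-below LP attains its optimum on its feasible polyhedron. I do not expect any genuine obstacle here: the content of the proposition is simply that affine substitution commutes with optimization, and the LP structure is immediate once the objective has been rewritten in terms of $\alpha$.
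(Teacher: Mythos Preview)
Your argument is correct: the proposition is exactly the observation that the affine change of variables $x = V\alpha + c$ turns the optimization over $\llbracket\Theta\rrbracket$ into a linear program in $\alpha$, and your two-sided inequality cleanly establishes equality of the optimal values. The remark about attainment and degenerate parametrizations is accurate but not required for the proposition as stated.

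There is nothing to compare against, however: the paper states this proposition (like the preceding ones on affine mapping, linear transformation, and intersection) as a basic fact about $\mathcal{AH}$-polytopes and does not supply a proof. Your write-up is the standard justification one would give, and it is entirely adequate.
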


\subsection{Safety Problem Formulation}
Verifying the safety of the closed-loop system means proving the absence of \emph{unsafe paths} under all operating conditions.
For simplified presentation, we consider a discrete-time version of the problem, where we only check for collisions once a second when the system is activated.
Our analysis could be extended to continuous time through \emph{conservative time-discretization} approaches from hybrid systems reachability analysis~\cite{forets2021conservative}, which essentially bloat the initial set and then perform discrete-time analysis.
\begin{definition}[Path]
A path is written as $s_1 \xrightarrow{\alpha_1} s_2 \xrightarrow{\alpha_2} \ldots \xrightarrow{\alpha_{n-1}} s_n$, where successive values of $s_i$ and $s_{i+1}$ correspond to the state of the system one second apart according to the plant dynamics in Equation~\ref{eq:Model}. The command $\alpha_i$ is the system output from state $s_i$ using $\alpha_\text{prev} = \alpha_{i-1}$, with $s_1$ using the \textsc{coc} network.
Paths can either be in-plane, where $\dot{\tau} = 0$ and $\tau = 0$ in all states and so the $N_{1,*}$ networks get used to generate all commands, or out-of-plane, where $\dot{\tau} = -1$.
In the out-of-plane case, each state in the path should decrease $\tau$ by one second.
\end{definition}
An unsafe path has $s_1$ as an initial state and $s_n$ as an unsafe state.
\begin{definition}[Initial State]
An initial state of the state of the system is one where the aircraft are outside of the system's operating range ($\rho > 60760$ ft).
\label{def:initial_state}
\end{definition}

%
\begin{definition}[Unsafe State]
Unsafe states are defined to be any states in the near mid-air collision (NMAC) cylinder~\cite{marston2015acas}, where the horizontal separation $\rho$ is less than 500 ft and the time to loss of vertical separation $\tau$ is zero seconds.
\label{def:unsafe_state}
\end{definition}
The operating conditions where the system should ensure safety are extracted based on the training ranges used for the original neural networks~\cite{kochenderfer2011robust,katz2017reluplex}.
The system should be active when the distance between aircraft $\rho \in [0, 60760]$ ft, otherwise clear-of-conflict is commanded.
The valid values for the ownship velocity are $v_{own} \in [100, 1200]$ ft/sec, valid values for intruder velocity are $v_{int} \in [0, 1200]$ ft/sec, and the angular inputs $\theta$ and $\psi$ are both between $-\pi$ and $\pi$.
%







\section{Quantized State Backreachability}
\label{sec:algorithm}
Our verification strategy is to compute the backwards reachable set of states from all possible unsafe states, trying to a find a path that begins with an initial state.
We first partition the unsafe states along state quantization boundaries.

\subsection{Partitioning the Unsafe States} 
Since the system advisories are only based on relative positions and headings, we eliminate symmetry by assuming that at the time of the collision the intruder is flying due east and at the origin.
We then consider all possible positions of the ownship to account for all possible unsafe states.
Three quantization parameters are used in the analysis: $q_\text{pos}$ to quantize positions, $q_\text{vel}$ to quantize velocities, and $q_\theta$ to quantize the heading angle.
Based on these parameters, we partition the unsafe states into 8-d $\mathcal{AH}$-polytopes covering the entire set of possible unsafe states.
The eight dimensions correspond to the system states in the linear dynamics in Equation~\ref{eq:Model}, including positions $x$, $y$, and velocities $v^x$, $v^y$ for both the ownship and intruder.
Associated with each partition, we also enumerate the five possible previous commands $\alpha_\text{prev}$ and two possibilities for whether there is a relative vertical velocity---whether the time to loss of vertical separation is fixed at 0 or decreasing, $\dot{\tau} \in \{0, -1\}$.

To create partitions, the $x_{own}$ and $y_{own}$ values are divided into a grid based on $q_\text{pos}$.
The intruder position $(x_{int}, y_{int})$ is set to $(0, 0)$.
The intruder and ownship velocities are partitioned based on $q_\text{vel}$, which gets reflected in the $x$ and $y$ velocity state variables for the two aircraft.
The intruder is moving due east, so $v_{int}^y = 0$ and $v_{int}^x$ is set to the range of intruder velocities corresponding to the current partition.
The heading of the angle of the ownship is partitioned based on $q_\theta$, where each partition has a lower and upper bound on the heading $[\theta_{own}^{lb}, \theta_{own}^{ub}]$. 
From the current range of values for the ownship heading and the range of values for the ownship velocity, we can construct linear bounds on $v_{own}^x$ and $v_{own}^y$.
This is done by connecting five points, $a, b, c, d$ and $e$, where $a$ and $b$ are the points at two extreme angles and minimum velocity, $c$ and $d$ are the two extreme angles and max velocity, and $e$ is the point at the intersection of the tangent lines of the maximum velocity circle at $c$ and $d$.
A visualization is shown in Figure~\ref{fig:vel_partition}.
We generally use $q_\theta = 1.5$ deg (as it makes for a cleaner backreachability step), which guarantee all possible $v_{own}^x, v_{own}^y$ values are covered.

\begin{figure}[t]
    \centering
    \includegraphics[width=0.6\columnwidth]{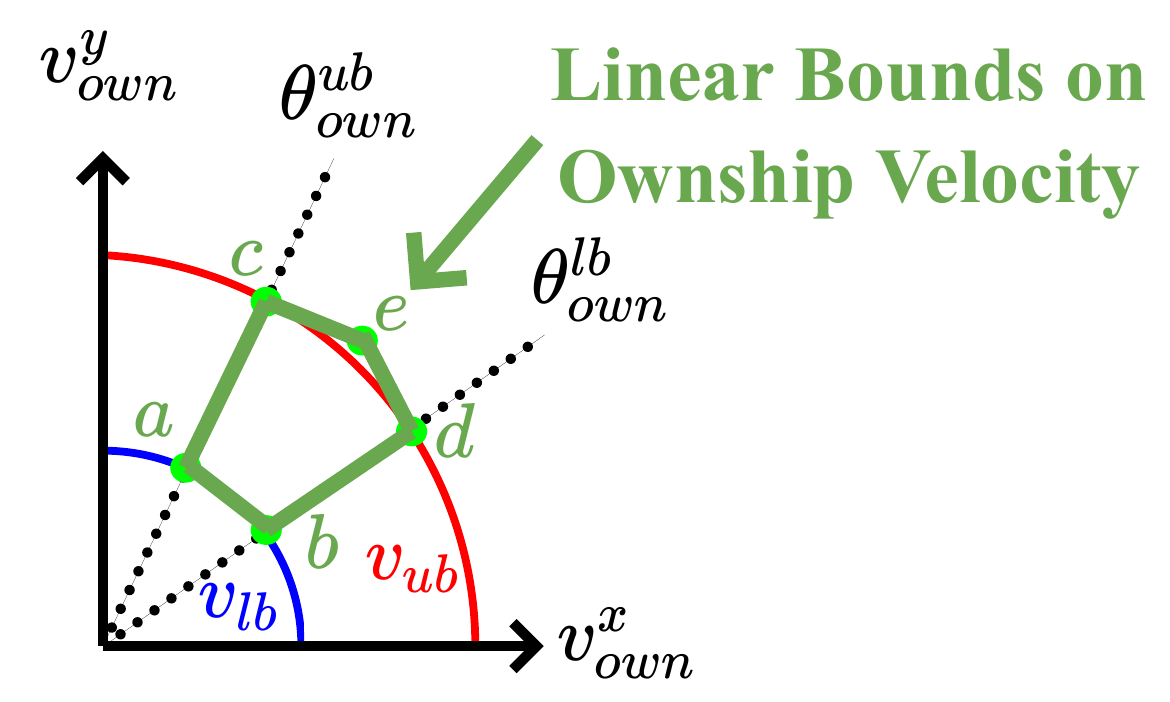}
    \caption{The ownship velocity range and heading angle range are used to create linear bounds on $v_{own}^x$ and $v_{own}^y$ by connecting the points $a, b, c, d$ and $e$.}
    \label{fig:vel_partition}
\end{figure}

\subsection{Backreachability from Each Partition} 
Once a covering of the entire set of unsafe states is performed, for each partition we compute the \emph{exact} set of predecessor states that can lead to the states in the partition at a previous step.
This process is repeated until either no predecessors exist or an initial state predecessor is found\footnote{Degenerate paths could theoretically exist of infinite length that never include a valid initial state, but we did not observe this occurring in practice.}, as described in Definition~\ref{def:initial_state}.
In the latter case, a path exists from an initial state to a partition of the unsafe states in the quantized closed-loop system.
Otherwise, if no partitions contain unsafe paths, then the quantized closed-loop system is safe.

The \texttt{check\_state} function in Algorithm~\ref{alg:highlevel} recursively computes and checks predecessors.
The input is a state set $\mathcal{S}$, which is initially an 8-d partition of the unsafe states represented as an $\mathcal{AH}$-Polytope, as well as the associated value of $\alpha_\text{prev}$ and the time to loss of vertical separation, $\tau = 0$ in all unsafe states.


\begin{algorithm}[t]
\SetKwInOut{Function}{Function}
\SetKwInOut{Input}{Input}
\SetKwInOut{Output}{Output}
\SetKw{Return}{return}
\Function{\texttt{check\_state}, Recursively checks safety of predecessors}
\Input{State set: $\mathcal{S}$, Prev cmd: $\alpha_{\textnormal{prev}}$,
Time to loss of vertical separation: $\tau$}
\Output{Verification Result (\textsf{safe} or \textsf{unsafe})}

$\mathcal{P}$ = \texttt{backreach\_step}($\mathcal{S}$, $\alpha_\text{prev}$) \tcp{state set of one-step predecessors}  \label{line:prev}

$\tau_\text{prev} = \tau - \dot{\tau}$ \tcp{$\dot{\tau}$ is fixed at either 0 or -1} \label{line:tau_prev}

\For{ $\alpha_{\textnormal{prevprev}}$ \textnormal{ in } \textsc{[coc, wl, wr, sl, sr]} \label{line:prevprevloop}
}{
predecessor\_quanta $\gets$ List()

all\_correct $\gets$ \textsc{true}

\For{ $q$ \textnormal{ in } $\textnormal{\texttt{possible\_quantized\_states}}(\mathcal{P})$ \label{line:possible_quantized}
}{
\uIf{\textnormal{\texttt{run\_network}(}$\alpha_{\textnormal{prevprev}}, \tau_\text{prev}$, q\textnormal{)} = $\alpha_\textnormal{prev}$ \label{line:run_network}
}{
predecessor\_quanta.\texttt{append}($q$) \label{line:append}

\uIf{$\rho_\text{min}(q) > 60760$}{
  \Return \textsf{unsafe} \tcp{predecessor is valid initial state} \label{line:unsafe}
  }
} 
\uElse{
all\_correct $\gets$ \textsc{false} \label{line:all_correct_false}
}
} 

\uIf{\textnormal{all\_correct}}{
\tcp{recursive case without splitting}

  \If{\textnormal{\texttt{check\_state}}($\mathcal{P}, \alpha_{\text{prevprev}}, \tau_\text{prev}$) = \textnormal{\textsf{unsafe}} \label{line:single_recurse}}
  {\Return{\textnormal{\textsf{unsafe}}}}
  
} 
\uElse{
\tcp{recursive case with splitting along quantum boundaries}

  \For{ $q$ \textnormal{ in } $\textnormal{predecessor\_quanta}$
    }
    {
    $\mathcal{T} \gets $ \texttt{ quantized\_to\_state\_set}(q) \label{line:quantized_to_state_set}
    
    $\mathcal{Q} \gets \mathcal{T} \cap \mathcal{P}$ \label{line:quantized_intersect}
    
    \If{\textnormal{\texttt{check\_state}}($\mathcal{Q}, \alpha_{\text{prevprev}}, \tau_\text{prev}$) = \textnormal{\textsf{unsafe}}}
  {\Return{\textnormal{\textsf{unsafe}}}}
    }
} 

} 
\Return{\textnormal{\textsf{safe}}}

\caption{High-level algorithm for single partition backreachability.}
\label{alg:highlevel}
\end{algorithm}


In line~\ref{line:prev}, \texttt{backreach\_step} is called, which returns the predecessor set of states as an $\mathcal{AH}$-polytope $\mathcal{P}$.
This is done by taking the linear derivative matrix $A_c$ from Equation~\ref{eq:Model} with the value of $c$ corresponding to $\alpha_\text{prev}$, and then computing the matrix exponential $W = e^{-A_c}$.
The resulting matrix is the solution matrix for the system one second prior.
A linear transformation of the $\mathcal{AH}$-polytope $S$ is then performed by $W$ in order to obtain $\mathcal{P}$.
In line~\ref{line:tau_prev}, the value of the time to loss of vertical separation at the previous step $\tau_\text{prev}$ is computed.
This either always equals 0 if $\dot{\tau} = 0$ for the current partition  corresponding to in-plane flight, or increases by 1 at each call to \texttt{check\_state} if $\dot{\tau} = -1$  for out-of-plane flight.

Next, the algorithm computes states in $\mathcal{P}$ where the command produced by the networks was $\alpha_\text{prev}$ and the time to loss of vertical separation was the value at the previous step, $\tau_\text{prev}$.
This requires iterating over the five possible networks that could have been used at the prior state (the loop on line~\ref{line:prevprevloop}).
For each network (corresponding to $\alpha_\text{prevprev}$), we check each quantized state in $\mathcal{P}$ (line~\ref{line:possible_quantized}).

The \texttt{possible\_quantized\_states} returns a list of \emph{quantized states}, which are 5-tuples of integers, 
$q=(dx, dy, \theta_{own}, v_{own}, v_{int})$.
The $dx$ and $dy$ terms correspond to the difference in positions between the intruder and ownship, divided by the position quantum $q_\text{pos}$.
The $\theta_{own}$ term is the heading angle divided by $q_\theta$, and the velocities $v_{own}$ and $v_{int}$ are the fixed aircraft velocities, integer divided by $q_\text{vel}$.
The function computes the possible quantized states by using linear programming to find $\mathcal{P}$'s bounding box, and then looping over possible quantized states to check for feasibility when intersected with $\mathcal{AH}$-polytope $\mathcal{P}$.

Line~\ref{line:run_network} runs the neural network corresponding to $\alpha_\text{prevprev}$ on quantized state $q$ to check if the correct command ($\alpha_\text{prev}$) is obtained.
This process requires converting from the quantized state (a 5-tuple of integers) to continuous inputs for the neural network.
To do this, we use Equation~\ref{eq:lin_to_nonlinear}, noting that the $\theta_{own}$ is quantized using $q_\theta$, $\theta_{int}$ is always 0, and the computation of $\rho$ and $\theta$ uses the dequantized value of $dx$ and $dy$ ($x_{int} - x_{own}$ is taken to be $\frac{q_\text{pos}}{2} + dx * q_\text{pos}$).

When the network output matches the required $\alpha_\text{prev}$ command, line~\ref{line:append} adds the quantized state to the valid list of predecessors \texttt{predecessor\_quanta}. 
Otherwise, line~\ref{line:all_correct_false} sets the \texttt{all\_correct} flag to \texttt{false}, since some of the quantized states are not valid predecessors.
Line~\ref{line:unsafe} checks if the predecessor state satisfies the initial state condition, in which case an unsafe path has been found.
On this line, $\rho_\text{min}(q)$ is the minimum aircraft separation distance in the quantized state $q$, which must be greater than 60760 ft in an initial state.

After classifying each quantized predecessor state, either all quantized states had the correct output or some did not.
Based on this, we either recursively call \texttt{check\_state} on the entire set $\mathcal{P}$ (line~\ref{line:single_recurse}), or we split the set $\mathcal{P}$ into parts, and only recursively call \texttt{check\_state} on parts that had the correct output.
On line~\ref{line:quantized_to_state_set}, \texttt{quantized\_to\_state\_set} returns the 8-d continuous states corresponding to the quantized state $q$, which is then intersected with $\mathcal{P}$ before being recursively passed to \texttt{check\_state}.
When splitting is performed, it is possible that no states had the correct output (\texttt{predecessor\_quanta} may be empty).

An illustration of the algorithm is provided in Figure~\ref{fig:check-state-illustration}.
In the figure, the set $\mathcal{P}$ is covered by nine quantized states returned by \texttt{possible\_quantized\_states} (the dots on the right side).
Of these nine, eight have a correct output (blue dots), and one has an incorrect output (red dot).
In this case, the algorithm would split the set $\mathcal{P}$ into eight parts and call \texttt{check\_state} on each recursively\footnote{An implementation optimization could be to reduce this splitting into only three parts. Three is the minimum in this case, since $\mathcal{AH}$-polytopes must be convex.}.

\begin{figure}[t]
    \centering
    \includegraphics[width=\columnwidth]{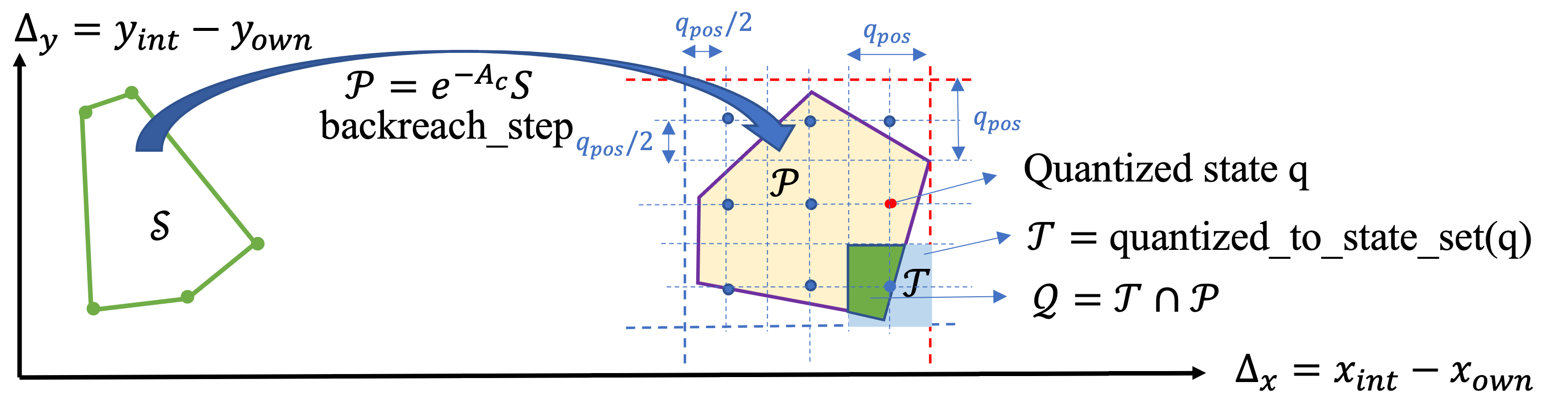}
    \caption{Illustration of Algorithm~\ref{alg:highlevel} given state set $\mathcal{S}$.}
    \label{fig:check-state-illustration}
\end{figure}

We next prove the described algorithm is sound with respect to the safety of the quantized closed-loop system.

\begin{theorem}[Soundness]
If \textnormal{\texttt{check\_state}} returns \textnormal{\textsf{safe}} for every partition, the quantized closed-loop system is safe.
\end{theorem}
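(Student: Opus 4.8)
The plan is to prove the contrapositive: assuming the quantized closed-loop system admits an unsafe path, I exhibit a partition on which \texttt{check\_state} returns \textsf{unsafe}. So let $\pi = s_1 \xrightarrow{\alpha_1} s_2 \xrightarrow{\alpha_2} \cdots \xrightarrow{\alpha_{n-1}} s_n$ be an unsafe path of the quantized system: $s_1$ is an initial state, $s_n$ is unsafe (Definitions~\ref{def:initial_state}--\ref{def:unsafe_state}), consecutive states are related by the dynamics of Equation~\ref{eq:Model} with the $c$-value dictated by $\alpha_i$, and each $\alpha_i$ is the command the \emph{quantized} system emits at $s_i$ with previous command $\alpha_{i-1}$ (taking $\alpha_0 = \textsc{coc}$). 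Because the partitioning covers every unsafe state together with every choice of previous command and of $\dot\tau$, there is a partition $\mathcal{S}_0$ whose associated previous command is $\alpha_{n-1}$ and whose vertical rate matches $\pi$, with $s_n \in \llbracket \mathcal{S}_0 \rrbracket$. It then suffices to show the top-level call $\texttt{check\_state}(\mathcal{S}_0,\alpha_{n-1},0)$ cannot return \textsf{safe}.

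The heart of the argument is an induction on $j$ ($1 \le j \le n-1$) establishing the invariant: \emph{any invocation of \texttt{check\_state} on a state set $\mathcal{S}$ with $s_{j+1}\in\llbracket\mathcal{S}\rrbracket$, previous command $\alpha_{j}$, and vertical-separation time equal to that of $s_{j+1}$ in $\pi$, does not return \textsf{safe}.} Since the top-level call is the case $j = n-1$, the theorem follows. For the step I would unfold one iteration of Algorithm~\ref{alg:highlevel}. Line~\ref{line:prev} sets $\mathcal{P}$ to the image of $\mathcal{S}$ under $e^{-A_c}$; as $\mathcal{AH}$-polytope affine maps are exact (Affine Mapping proposition) and $s_j \xrightarrow{\alpha_j} s_{j+1}$ is a one-second trajectory of Equation~\ref{eq:Model}, we get $s_j = e^{-A_c}s_{j+1}\in\llbracket\mathcal{P}\rrbracket$. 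Since \texttt{possible\_quantized\_states} enumerates \emph{every} quantum whose cell meets $\mathcal{P}$, the quantum $q_j$ of $s_j$ is among those iterated on line~\ref{line:possible_quantized}. When the outer loop reaches $\alpha_\text{prevprev} = \alpha_{j-1}$, line~\ref{line:run_network} evaluates $\texttt{run\_network}(\alpha_{j-1},\tau_\text{prev},q_j)$; here $\tau_\text{prev} = \tau - \dot\tau$ equals the true vertical-separation time of $s_j$ in both the $\dot\tau = 0$ and $\dot\tau = -1$ cases, and \texttt{run\_network} reproduces the quantized control law, so the output is exactly $\alpha_j = \alpha_\text{prev}$ and $q_j$ is appended to \texttt{predecessor\_quanta} (line~\ref{line:append}). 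If $\rho_\text{min}(q_j) > 60760$, line~\ref{line:unsafe} returns \textsf{unsafe}; this is the base case, since for $j=1$ the state $s_1$ is initial. Otherwise $s_j$ survives into the recursive call: either $\texttt{check\_state}(\mathcal{P},\alpha_{j-1},\tau_j)$ on line~\ref{line:single_recurse}, or $\texttt{check\_state}(\mathcal{Q},\alpha_{j-1},\tau_j)$ with $\mathcal{Q}$ the \emph{exact} intersection (Intersection proposition) of $\mathcal{P}$ with the cell of $q_j$ on line~\ref{line:quantized_intersect}, which again contains $s_j$. Either recursive call is an instance of the invariant at index $j-1$, so by the induction hypothesis it does not return \textsf{safe}; hence neither does the original call.

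I expect the main obstacle to be the \emph{completeness} side of every step — ensuring no predecessor lying on $\pi$ is ever silently discarded. This reduces to three facts needing care: (i) \texttt{possible\_quantized\_states} returns \emph{all} quanta whose cells meet $\mathcal{P}$, since a missed quantum could conceal the counterexample; (ii) the splitting on lines~\ref{line:quantized_to_state_set}--\ref{line:quantized_intersect} covers $\mathcal{P}$ without loss, which rests on exactness of $\mathcal{AH}$-polytope intersection; and (iii) the cell-level test $\rho_\text{min}(q) > 60760$ correctly witnesses initial states — it is sound (it fires only when the whole cell is outside the operating range), and one must pin down that it does fire on the cell of $s_1$, which is immediate when the cells along $\pi$ do not straddle $\rho = 60760$, and otherwise follows by noting that the chain of \textsc{coc} predecessors of $s_1$ has separation eventually exceeding $60760$ — or else the recursion fails to terminate, contradicting that \texttt{check\_state} returned \textsf{safe} (cf.\ the degenerate-path footnote). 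The remaining details — exactness of the matrix-exponential transformation and the bookkeeping of $\tau_\text{prev}$ and $\alpha_\text{prev}$ through the recursion — are routine.
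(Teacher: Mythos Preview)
Your proposal is correct and follows essentially the same route as the paper: argue the contrapositive by tracking an assumed unsafe path backward through successive calls to \texttt{check\_state}, showing that each predecessor $s_{i-1}$ survives into a recursive call until the initial state $s_1$ triggers the \textsf{unsafe} return on line~\ref{line:unsafe}. Your version is more explicit (formal induction, appeals to exactness of the $\mathcal{AH}$-polytope operations) and even flags the boundary subtlety where the cell of $s_1$ might straddle $\rho = 60760$, which the paper's proof passes over.
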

\begin{proof}
We proceed by contraction. 
Assume the quantized closed-loop system is unsafe and so these exists a finite path from an initial state to an unsafe state, $s_1 \xrightarrow{\alpha_1} s_2 \xrightarrow{\alpha_2} \ldots \xrightarrow{\alpha_{n-1}} s_n$.
%
Since the unsafe state partitioning covers the full set of unsafe states, the unsafe state $s_n$ is in some partition.
We can follow the progress of $s_n \in \mathcal{S}$, through \texttt{check\_state} at each recursive call.

At each call, $s_i \in \mathcal{S}$ has a predecessor $s_{i-1} \in \mathcal{P}$ that gets to $s_i$ using command $\alpha_{i-1}$.
In the call to \texttt{check\_state}, $\alpha_\text{prev}$ will be $\alpha_{i-1}$.
The value of $\tau_\text{prev}$ is incremented at each call on line~\ref{line:tau_prev} and so always correctly corresponds to $s_{i-1}$.
%
%
Since $s_{i-1} \in \mathcal{P}$, $s_{i-1}$ will also be in one of the quantized states $q_{i-1}$ checked on line~\ref{line:possible_quantized}.
The existence of the counterexample path segment $\xrightarrow{\alpha_{i-2}} s_{i-1} \xrightarrow{\alpha_{i-1}} s_i$ means that the condition on line~\ref{line:run_network} will be true when $\alpha_\text{prevprev} = \alpha_{i-2}$, and so $q_{i-1}$ will be added to \texttt{predecessor\_quanta}.
Since $s_{i-1}$ is both in $\mathcal{P}$ and in the state set corresponding to a quantized state in \texttt{predecessor\_quanta}, it will be used in a recursive call to \texttt{check\_state}.
This argument can be repeated for all states in the unsafe path back to the initial state $s_1$, which would have been returned as \textsf{unsafe} on line~\ref{line:unsafe} rather than used in a recursive call.
This contradicts the assumption that \texttt{check\_state} returned \textsf{safe} for every partition. \qed
\end{proof}

\subsection{Falsification of Original (Unquantized) System}
\label{ssec:falsification}
The algorithm in the previous section can be used to efficiently find unsafe paths of the original, unquantized, closed-loop neural network control system.
This is done by repeatedly calling the algorithm with smaller and smaller quantization constants $q_\text{pos}$, $q_\text{vel}$ and $q_\theta$ and checking the quantized system for safety.

At each step if the safety proof fails, with small modifications to \texttt{check\_state} we can get the trace corresponding to the unsafe path for each partition.
In particular, rather than simply returning \textsf{unsafe} on line~\ref{line:unsafe}, we can instead return the set of unsafe initial states \texttt{quantized\_to\_state\_set}(q) $\cap ~ \mathcal{P}$.
A witness point inside this set can be obtained through linear programming\footnote{For witness points, we use the Chebyshev center of the six-dimensional state polytope (removing $y_{int}$ and $v^y_{int}$ since they are fixed at zero), as it helps avoid numerical issues that can occur at the boundaries of the set.}.
This witness point is then executed on the original system, without quantization, checking for safety.
If the witness point is safe in the non-quantized system, the quantization constants are refined by taking turns dividing each of them in half.

\begin{theorem}[Completeness]
By following the falsification approach above and repeatedly refining $q_\text{pos}$, $q_\text{vel}$ and $q_\theta$, either we will prove the quantized system is safe or find an unsafe trace in the original, unquantized system.
\end{theorem}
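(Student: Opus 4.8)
The plan is to argue by contradiction: suppose the refinement loop runs forever, so that at every level the quantized system is not proven safe and the extracted witness point $w_k$ (at level $k$) is safe in the original system. Since $q_\text{pos}$, $q_\text{vel}$ and $q_\theta$ are each halved in round-robin, all three tend to $0$. First I would note that the run of \texttt{check\_state} that returned \textsf{unsafe} at level $k$ yields not merely $w_k$ but a full unsafe path $\pi_k = (w_k = s^k_0 \xrightarrow{c^k_0} \cdots \xrightarrow{c^k_{m_k-1}} s^k_{m_k})$ of the \emph{quantized} system, with $s^k_0$ an initial state, $s^k_{m_k}$ an unsafe state, and each transition consistent with Equation~\ref{eq:Model} for the command the network selects on the quantized state; that the chain of $\mathcal{AH}$-polytopes visited by the recursion actually carries a point-level trajectory follows from the construction of \texttt{backreach\_step} and the intersections on lines~\ref{line:quantized_to_state_set}--\ref{line:quantized_intersect} --- i.e.\ the soundness argument read backwards.

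Next I would run a compactness argument on $\{\pi_k\}$. Out-of-plane paths have $\dot\tau=-1$ with $\tau=0$ only at the unsafe endpoint, so their length is bounded by the largest admissible $\tau$; in-plane paths could a priori be unbounded, but this is precisely the degenerate infinite-path situation already flagged in the footnote after Definition~\ref{def:unsafe_state}, which I would set aside. On a subsequence with $m_k = m$ fixed, the relative aircraft speed $v_{own}+v_{int} \le 2400$ ft/s lets $\rho$ grow by at most $2400\,m$ over the $m$ steps, so $\rho(w_k) < 500 + 2400 m$; together with the bounded velocity ranges and the symmetry normalization ($x_{int}=y_{int}=v^y_{int}=0$, $v^x_{int}=v_{int}$) every $w_k$ then lies in a fixed compact set. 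Pass to a further subsequence with $w_k \to w^\ast$ and --- the command set being finite --- with $c^k_i = c_i$ constant for every $i$. Then $s^k_i = e^{A_{c_{i-1}}}\!\cdots e^{A_{c_0}}\,w_k \to s^\ast_i := e^{A_{c_{i-1}}}\!\cdots e^{A_{c_0}}\,w^\ast$, with $s^\ast_0$ an initial state and $s^\ast_m$ an unsafe state (the NMAC set being closed).

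The crux is to recognize $s^\ast_0 \to \cdots \to s^\ast_m$ as a \emph{robust} trace of the \emph{original} system. At $s^k_i$ the quantized system issues the network's arg-max evaluated at the dequantized centre of the $q_k$-cell containing $s^k_i$; as the quanta tend to $0$ and $s^k_i \to s^\ast_i$, that centre converges to $s^\ast_i$, and since the ReLU network composed with the continuous coordinate change of Equation~\ref{eq:lin_to_nonlinear} is continuous, the raw output vectors converge to the network output at $s^\ast_i$. Hence the constant winner $c_i$ is a maximizer of that output; if it is the \emph{unique} maximizer, the original system issues $c_i$ on an entire neighbourhood of $s^\ast_i$. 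Inducting along the path --- the first command is \textsc{coc} for everyone, since $\rho(w^\ast)\ge 60760$ puts $w^\ast$ outside the operating range --- the original trajectory started from any point sufficiently close to $w^\ast$ follows $c_0,\dots,c_{m-1}$ and ends close to $s^\ast_m$; if $\rho(s^\ast_m) < 500$ strictly, that endpoint is unsafe, so for all large $k$ the witness $w_k$ is unsafe in the original system --- contradicting the assumption that the loop did not terminate at level $k$. Combined with the footnote's dismissal of degenerate infinite paths, this rules out non-termination, so the loop ends in one of the two claimed verdicts.

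\noindent\textbf{Main obstacle.} The hard part is the gap between ``$c_i$ is \emph{a} maximizer'' and ``$c_i$ is \emph{the} maximizer'': if two advisories are exactly tied somewhere on the limit trace, arbitrarily fine quantization can flip the command there and the quantized and original trajectories can then diverge, destroying robustness. I expect to close this with a genericity hypothesis --- no reachable state of the limit trace lies on an arg-max tie, and the unsafe state is entered with $\rho < 500$ strictly --- or by perturbing $w^\ast$ slightly inward before taking limits; absent such a hypothesis the statement holds only up to these boundary cases. The secondary loose end is the unbounded-length (degenerate infinite path) case, which this argument does not cover and which the paper itself observes does not occur in practice. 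The remaining ingredients --- the successive compactness extractions, the relative-speed bound, and continuity of the network and of Equation~\ref{eq:lin_to_nonlinear} away from $\rho = 0$ --- are routine and I would only state them.
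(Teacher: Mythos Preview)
Your compactness-and-limit argument is a genuinely different route from the paper's. The paper does not argue by contradiction or extract any limiting trajectory; instead it proceeds by a direct case split. In the main case it \emph{assumes} the original system is robustly unsafe --- meaning there already exists a ball $\mathcal{B}_\text{init}$ of initial states of radius $\delta>0$, all following the same command sequence $\alpha_1,\dots,\alpha_n$ and all ending in the unsafe set --- and then observes that the composite one-step solution map $A_C = A_{c_n}\cdots A_{c_1}$ is invertible (each factor is), so the image of $\mathcal{B}_\text{init}$ is an ellipsoid $\mathcal{E}_\text{unsafe}$ of positive volume inside the unsafe region. Refinement of $q_\text{pos},q_\text{vel},q_\theta$ eventually yields an unsafe-state partition entirely contained in $\mathcal{E}_\text{unsafe}$, and any witness extracted from the backreach of that partition lies in $\mathcal{B}_\text{init}$, hence is unsafe in the original system. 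The residual (non-robust) case is dispatched not by a genericity hypothesis but by appealing to finite-precision arithmetic: once the quanta drop below machine precision, the quantized and unquantized systems coincide.

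What each approach buys: the paper's forward argument is short and sidesteps your path-length difficulty entirely, since it fixes one robustly unsafe path of length $n$ at the outset rather than having to control a sequence $m_k$; on the other hand it simply postulates the robust ball $\mathcal{B}_\text{init}$ rather than deriving it, and its finite-precision fallback is more a practical remark than a mathematical argument. Your approach is more ambitious --- you try to manufacture the robust witness from the sequence of quantized counterexamples via compactness and continuity --- but you pay for this with the subsequence bookkeeping and the in-plane length bound you could not close. Both proofs ultimately hit the same wall at arg-max ties on the limit trace; you name it explicitly as a genericity hypothesis, the paper absorbs it into the robustly-unsafe assumption and then invokes floating-point finiteness for the remainder.
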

\begin{proof}
First, consider the case that the system is \emph{robustly unsafe}, which we define as there existing a ball $\mathcal{B}_\text{init}$ of initial states of radius $\delta > 0$ that all follow the same command sequence $\alpha_1, \alpha_2, \ldots, \alpha_n$ and end in the unsafe set.
%
%
Since all the initial states follow the same command sequence, the linear transformations corresponding to the commands $\alpha_1, \alpha_2, \ldots, \alpha_n$, which we call $A_{c_1}, A_{c_2}, \ldots, A_{c_n}$
can be multiplied together into a single matrix that transforms initial states to unsafe states, $A_{C} = A_{c_n} \ldots A_{c_2} A_{c_1}$.
The matrix $A_{C}$ is invertible since all the transformations corresponding to each command $A_{c_1}, A_{c_2}, \ldots, A_{c_n}$ are invertible.
The matrix $A_{C}$ being invertible means that since the volume of the ball in the initial states $\mathcal{B}_\text{init}$ is nonzero, the corresponding set of states in the unsafe set is an ellipsoid with nonzero volume, which we call $\mathcal{E}_\text{unsafe}$.
Through refinement of the quantization parameters $q_\text{pos}$, $q_\text{vel}$ and $q_\theta$, eventually a partition will be entirely contained in $\mathcal{E}_\text{unsafe}$.
When this happens, every witness point of the quantized counterexample from that partition will be in $\mathcal{B}_\text{init}$, and so will be an initial state of an unsafe oath of the original, unquantized system.

Perhaps less practically, even if the original system is not robustly unsafe, the process still will theoretically terminate when finite-precision numbers are used in the non-quantized system, such as with air-to-air collision avoidance neural networks that use 32-bit floats.
As the quantization values are halved, the difference between the unsafe state in the quantized and nonquantized system is also reduced, until it reaches numeric precision. \qed
\end{proof}

The second case may seem like one needs to split the entire state space up to machine precision, which would make it very impractical. 
However, if the goal is to search for counterexamples, then the process can first refine the regions that were found as unsafe using the previous quantization values, in a depth-first search manner.
In this way, when the system is unsafe the process would not need to immediately refine the entire state space in order to find these counterexamples.
Also keep in mind that the quantized system being safe is a valid outcome of this refinement process, and this does not mean that the original, unquantized system, is safe.

\section{Evaluation}
We implemented the approach and set out to prove the safety of quantized closed-loop air-to-air collision avoidance system\footnote{The code and instructions to reproduce all the results are online: \url{https://github.com/stanleybak/quantized_nn_backreach/releases/tag/NFM2022_submitted}}.
%
%
We ran the measurements on an Amazon Web Services (AWS) Elastic Computing Cloud (EC2) server with a \texttt{c6i.metal} instance type, which has a 3.5 GHz Intel Xeon processor with 128 virtual CPUs, and 256 GB memory.
The algorithm is easily parallelized as proofs for each partition of the unsafe states can be checked independently.

\subsection{Complete Proof of Safety Attempt}
We first attempted a proof of safety for the entire range of unsafe states for ACAS Xu.
For this, we started with large quantization values, $q_\text{pos}$ = 500 ft, $q_\text{vel}$ = 100 ft/sec, and $q_\theta$ = 1.5 deg.
In this case, the unsafe near-mid-air collision circle of radius 500 ft can be covered with 4 partitions, the complete velocity range of the ownship $[100, 1200]$ needs 11 partitions, the velocity of the intruder $[0, 1200]$ needs 12 partitions, the heading angle of the ownship is divided into $\frac{360\text{ deg}}{1.5\text{ deg}}$ = 240 partitions, and there are 5 choices for the $\alpha_\text{prev}$ and two possibilities to check for $\dot{\tau}$.
Multiplying these together, we get a total of 1267200 partitions of the unsafe states, each of which we pass to \texttt{check\_state} (Algorithm~\ref{alg:highlevel}).

%
%
This quickly, within a minute, finds counterexamples in the quantized system. 
When the witness initial states of the quantized counterexample are replayed on the original non-quantized system, according to the falsification algorithm from Section~\ref{ssec:falsification}, these were also found to be unsafe!
The exact runtime before an unsafe case is found depends on the order in which the partitions are searched, but we found that although changing this did affect the counterexample produced, the runtime was usually less than a minute.
Two of the unsafe cases are shown in Figure~\ref{fig:unsafe_square} in parts (a) and (b).

In the situation shown in Figure~\ref{fig:unsafe_square}(a), the intruder starts beyond the range of the network ($\rho > 60780$ ft).
As soon as the intruder gets in range, a turn is commanded, but the velocity of the ownship is slow and a collision still occurs.
This situation looks like it could be fixed by increasing the range of the system beyond $60780$ ft---likely requiring retraining the networks---to allow a turn to be commanded earlier.
Alternatively, perhaps adding a ``do not turn'' option as a possible output would be another way to address this scenario (clear-of-conflict could allow the ownship to maneuver as desired which may be unsafe here).

Figure~\ref{fig:unsafe_square}(b) shows another unsafe case found that is particularly concerning.
This is a tail-chase scenario, although the ownship is already moving away from the straight-line trajectory of the intruder.
The system nonetheless commands a turn and actively maneuvers the ownship aircraft back into the path of the intruder.
This situation demonstrates one of the dangers of the collision risk metric used to evaluate the effectiveness of many air-to-air collision avoidance systems, which compares the number of near mid-air collisions (NMAC) with and without the system using a large number of simulations.
Although a system can be effective by this metric, in specific cases it may still create collisions that would not otherwise have occurred, as demonstrated in this scenario.

\begin{figure}[t]
\centering
\subfloat[Immediate Turn Command. \\ Video: \url{https://youtu.be/WZWPKwIXfcM}]
{\includegraphics[width=.48\textwidth]{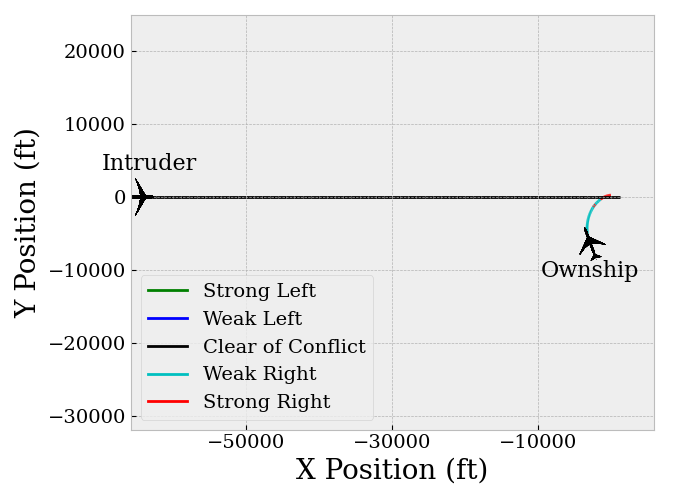}}
~~
\subfloat[Safety System Causes Crash. \\ Video: \url{https://youtu.be/dDwRiv_Kh2M} ]
{\includegraphics[width=.48\textwidth]{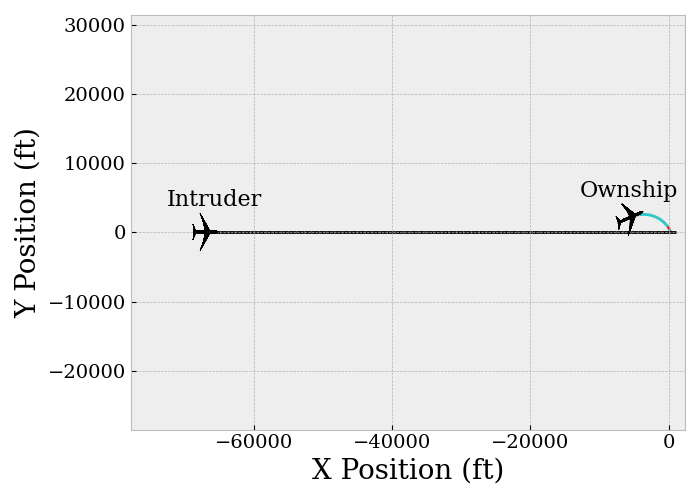}}
\\
\subfloat[Fast Ownship and $\tau > 0$. \\ Video: \url{https://youtu.be/F_bykLR9lJw}]
{\includegraphics[width=.48\textwidth]{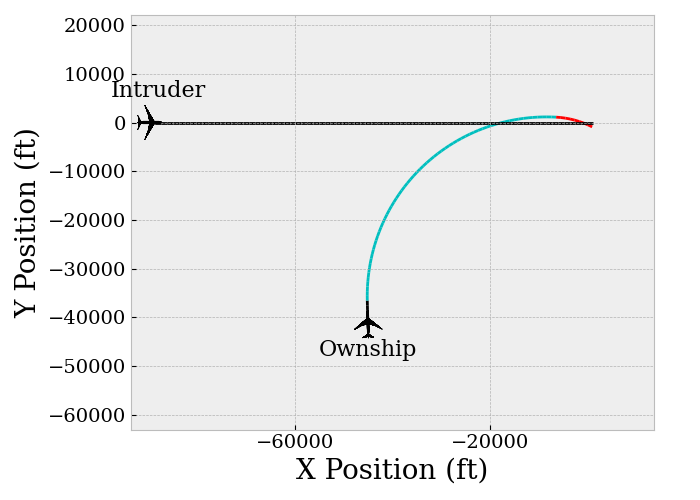}}
~~
\subfloat[Slow Intruder. \\ Video: \url{https://youtu.be/7B_-k0qpZTo}]
{\includegraphics[width=.48\textwidth]{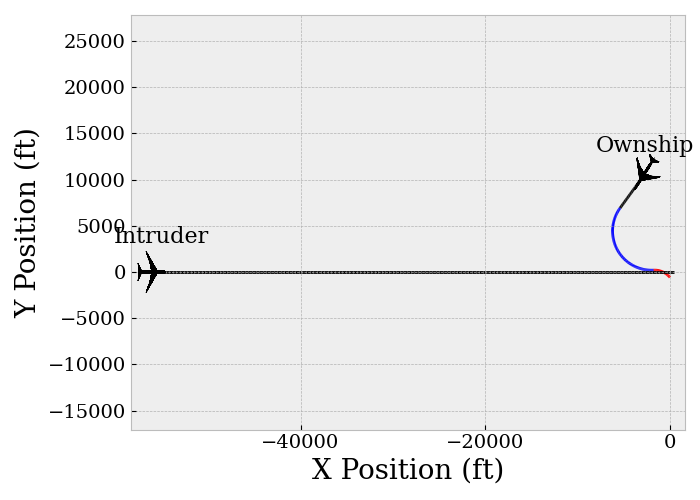}}
\caption{Unsafe counterexamples found in the original non-quantized NNCS. 
\ifarxiv
Detailed traces of the counterexamples are provided in Appendix~\ref{sec:counterexamples}.
\else
Step-by-step traces of the counterexamples are provided in the appendix of the extended report: \url{https://arxiv.org/abs/2201.06626}.
\fi
}\label{fig:unsafe_square}
\end{figure}

\subsection{Proving Safety in More Limited Operating Conditions}
\label{ssec:restricted_op}
As the proof of safety for the entire operating range failed, we next tried to prove safety in restricted operating conditions.
Many of the unsafe situations found, including the two above, had a slow ownship velocity and a fast intruder.
By making the ownship fast enough, we hypothesized collisions could be avoided.

When we restricted the range of $v_{own}$ to be in $[1000, 1200]$ ft/sec, using $q_\text{pos}$ = 250 ft, $q_\text{vel}$ = 50 ft/sec, and $q_\theta$ = 1.5 deg, we were able to guarantee safety of the quantized closed-loop neural network control system.
The proof requred checking 3.7 million cases and took about 32 minutes.
The longest runtime for any single call to \texttt{check\_state} (checking a single partition) was 63 seconds.

Reducing $v_{own}$ further to [950, 1000] ft/sec made the quantized system unsafe. 
Following the falsification approach from Section~\ref{ssec:falsification}, we refined the quantization parameters until we were able to find a counterexample in the original unquantized closed loop system.
In this case, the ownship was moving with $v_{own} = 964.1$ ft/sec, and the time to loss of vertical separation $\tau$ was initially 75 secs (the quantized system was safe for in-plane flight, with $\dot{\tau} = 0$).
This case is shown in Figure~\ref{fig:unsafe_square}(c).

From the other side, we can alternatively attempt to prove safety under the assumption that the intruder is slow without restricting the ownship's velocity.
In this case, the method also finds unsafe counterexamples in the unquantized system, such as the 159 second trace shown in Figure~\ref{fig:unsafe_square}(d) with $v_{int} = 390.1$ ft/sec.
\ifarxiv
The full trace for this situation is provided in Appendix~\ref{apx:slow_intruder} and has a peculiar characteristic.
\else
The full trace for this situation is provided in the appendix of the extended report\footnote{\url{https://arxiv.org/abs/2201.06626}} and has a peculiar characteristic.
\fi
The command switch from weak-left to strong-right a few seconds before the collision corresponds to the relative position angle $\theta$ wrapping from $-\pi$ to $\pi$.
This discontinuity in the network input between successive steps is a strong candidate root cause of the eventual near mid-air collision.

\subsection{Comparison with Other Approaches}
\label{ssec:comparison}
As far as we are aware, the proposed method is the first to provide safety guarantees while varying all of the operating conditions of the neural network compression of the collision avoidance system.

One related technique, based on computing discrete abstractions and forward reachability was able to provide safety guarantees for the similar Horizontal CAS~\cite{julian2019guaranteeing}.
This system is simpler to analyze: the inputs were modified to take in Cartesian state variables, the operating range was smaller ($\rho < 50000$), there were  fewer neural networks in the system, each of which had half as many neurons per layer, and critically, fixed velocities of $v_{own}=200$ and $v_{int}=185$ were considered, rather than using velocity ranges.
Despite these simplifications, analysis took 227 CPU hours, mostly on the neural network analysis step to analyze 74 million partitions.
For a comparison, we analyzed the larger neural networks in this work with the proposed state quantization and backreachability method, using the same fixed $v_{own}$ and $v_{int}$ values. 
Using a quantized system with $q_{pos} = 250$ ft and $q_\theta$ = 1.5 deg, the method proved safety of all 38400 partitions of the unsafe states in 60.6 seconds.
Also note that while the Horizontal CAS discrete abstraction approach can sometimes prove safety, it would be poor at generating counterexamples, as abstract reachability overapproximates the true reachable set; abstract counterexamples do not correspond to real counterexamples. 
In contrast, the backwards reachability performed in Algorithm~\ref{alg:highlevel} is exact with respect to the quantized system, and the gap between the quantized and original system can be reduced by refining the quantization parameters, making it highly effective for counterexample generation.

We also compared our method with simulation-based analysis, which cannot provide guarantees about system safety but should be able to find unsafe counterexamples if enough simulations are attempted, as the system was shown to be unsafe.
In earlier work~\cite{julian2019deep}, 1.5 million encounters were simulated for the original neural network compression to evaluate the risk of collisions, sampling from probability distributions of actual maneuvers and taking into account sensor noise.
We evaluated the same number of simulations without sensor noise and sampling over the entire set of operating conditions, in order to match the assumptions used in the safety proof.
We generated uniform random initial states by considering an initial $\rho \in [60760, 63160]$ and $\theta, \psi, v_{own}$ and $v_{int}$ in their entire operating range.
When considering $\dot{\tau} = -1$, we assigned the initial value of $\tau$ between 25 and 160 seconds, as the unsafe case in Figure~\ref{fig:unsafe_square}(d) was a 159 second trace.
We repeated the process of running 1.5 million simulations one hundred times each for both $\dot{\tau} = -1$ and $\dot{\tau} = 0$, in order to account for statistical noise.

In the $\dot{\tau} = 0$ case, each batch of 1.5 million simulations found on average 17.07 unsafe paths.
The unsafe cases were dominated by situations where the intruder velocity was low and the ownship velocity was high.
The mean value of $v_{int}$ was 997.8, with a standard deviation of 147.5.
The lowest values of $v_{int}$ over the unsafe cases in all 150 million simulations was 927.6, whereas Figure~\ref{fig:unsafe_square}(d) showed a case with $v_{int} =$ 390.1 found with our approach.
The mean value of $v_{own}$ in the unsafe cases was 133.4 with a standard deviation of 43.0.
The greatest value of $v_{own}$ over all the unsafe cases found with 150 million simulations was 452.3, whereas our approach found an in-plane case with $v_{own} =$ 881.6.

The performance of simulation analysis for the out-of-plane case is even worse, as the initial state must also correctly choose the value of the time to loss of vertical separation $\tau$ in order to find a collision.
Each batch of 1.5 million simulations with $\dot{\tau} = -1$ had on average 0.07 unsafe simulations.
The maximum ownship velocity $v_{own}$ in the unsafe cases had a mean of $175.4$ with a standard deviation of $77.9$.
The greatest value of $v_{own}$ over the unsafe cases found in all 150 million simulations was 343.0, whereas our approach found a case with $v_{own}=$ 964.1, as shown before in Figure~\ref{fig:unsafe_square}(c).

Overall, while simulation analysis may find some unsafe cases, it would be difficult to find the extreme velocity cases discovered with the proposed approach.
Further, simulation analysis is incomplete and cannot prove safety for the system under subsets of operating conditions as was done in Section~\ref{ssec:restricted_op}.


\section{Related Work}
\textbf{Simulation-based Safety Analysis.} The air-to-air collision avoidance system was originally evaluated using 1.5 million simulations \cite{kochenderfer2010airspace} based on Bayesian statistical encounter models.
This uses relaxed assumptions compared with our work, such as allowing for changes in acceleration.
The output of such analysis is not a yes/no assessment of safety, as the system can clearly be unsafe if the intruder is faster than the ownship and maneuvers adversarially, but rather a risk score assessment of the change in safety compared to without using the system.
Via simulation, given a bounded uncertainty in sensing and control, the probability of near-mid-air-collision was about $10^{-4}$ \cite{julian2019deep}.
Although simulations show that the system may be unsafe, we do not know if the collision occurs due to the uncertainty or the system itself.
In this work, we could show that the system itself was unsafe, even if we have perfect sensing and control. %

\vspace{1em}
\noindent
\textbf{Verification of NNCS.} 
The Verisig approach \cite{ivanov2019verisig} verifies a NNCS by transforming a network with a sigmoidal neural network controller to an equivalent hybrid system that can be analyzed with Flow*~\cite{chen2013flow}, a well-known tool for verifying nonlinear hybrid systems.
Another method \cite{huang2019reachnn,dutta2019reachability} combines polynomial approximation of the neural network controller with the plant's physical dynamics to construct a tight overapproximation of the system's reachable set.
The star set approach \cite{tran2019safety} shows that the exact reachable set of an NNCS with a linear plant model and a ReLU neural network controller can be computed, although this is expensive when initial states are large.
These methods build upon open-loop neural network verification algorithms~\cite{liu2019algorithms,tran2020verification}, which can be difficult to scale to large complex networks~\cite{vnncomp2021} and can sometimes lose soundness due to floating-point numeric issues~\cite{zombori2020fooling}.
The proposed quantization analysis only needs to execute neural networks, and so does not suffer from these problems.

\vspace{1em}
\noindent
\textbf{Verification of the Closed-loop Air-to-Air Collision Avoidance System.}
%
%
Existing works have verified NNCS with a single neural network controller on a small set of initial states \cite{johnson2021arch}. 
The closed-loop system involves switching between multiple neural networks and has a large set of initial states, creating a unique challenge for verification.
The simplified Horizontal CAS system was analyzed using fast symbolic interval analysis for neural network controllers~\cite{wang2018formal} to construct a discrete abstraction~\cite{julian2019guaranteeing}.
This method can consider sensor uncertainty, inexact turn commands, and pilot delay, although simplified assumptions are made, as discussed in Section~\ref{ssec:comparison}.
Recently, the same system as this work has been verified with extensions of the symbolic interval method~\cite{claviere2021safety} and with star-based reachability \cite{lopez2021scitech} in nnv~\cite{tran2020cav2} and nnenum~\cite{bak2021nnenum}.
These approaches use forward reachability analysis and provide sound but not complete verification results. 
However, verification has only been demonstrated for specific scenarios with small sets of initial states, not the full operating conditions considered here.

\section{Conclusion}
In this work, we set out to prove the \emph{closed-loop} safety of one of the most popular benchmarks for neural network verification methods, using a new algorithm based on state quantization and backreachability.
In principle, the approach scaled sufficiently well to be able to verify the system under all valid initial states and aircraft velocities.
However, the proof process instead found many unsafe scenarios where the original, unquantized system had near mid-air collisions, despite ideal assumptions on sensors and maneuvering.
Compared with random simulation-based analysis, we could find counterexamples at more extreme velocities, as well as provide proofs of safety of the quantized closed-loop system in more limited scenarios.

The approach is could be attractive for certification. 
A system with a quantization layer behaves like a large lookup table, and the method is therefore effective on any size network with any layer type, and may even be applicable to other machine learning approaches.
The trade-off of quantization is usually a small degradation in performance of the controller, with a significant benefit of reducing analysis complexity and allowing for the possibility of verification.

\section*{Acknowledgement}
This material is based upon work supported by the NSF EPSCoR First Award, the Air Force Office of Scientific Research and the Office of Naval Research under award numbers FA9550-19-1-0288, FA9550-21-1-0121, FA9550-22-1-0450 and N00014-22-1-2156. Any opinions, findings, and conclusions or recommendations expressed in this material are those of the author(s) and do not necessarily reflect the views of the NSF, US Air Force or US Navy.

\bibliographystyle{splncs04}
\bibliography{main}

\begin{thebibliography}{10}
\providecommand{\url}[1]{\texttt{#1}}
\providecommand{\urlprefix}{URL }
\providecommand{\doi}[1]{https://doi.org/#1}

\bibitem{bak2021nnenum}
Bak, S.: nnenum: Verification of relu neural networks with optimized
  abstraction refinement. In: NASA Formal Methods Symposium. pp. 19--36.
  Springer (2021)

\bibitem{bak17hscc}
Bak, S., Duggirala, P.S.: Hylaa: A tool for computing simulation-equivalent
  reachability for linear systems. In: Proceedings of the 20th International
  Conference on Hybrid Systems: Computation and Control. HSCC '17 (2017)

\bibitem{vnncomp2021}
Bak, S., Liu, C., Johnson, T.: The second international verification of neural
  networks competition ({VNN-COMP} 2021): Summary and results. arXiv preprint
  arXiv:2109.00498  (2021)

\bibitem{bak2020cav}
Bak, S., Tran, H.D., Hobbs, K., Johnson, T.T.: Improved geometric path
  enumeration for verifying relu neural networks. In: Proceedings of the 32nd
  International Conference on Computer Aided Verification. Springer (2020)

\bibitem{bak2019hscc}
Bak, S., Tran, H.D., Johnson, T.T.: Numerical verification of affine systems
  with up to a billion dimensions. In: Proceedings of the 22Nd ACM
  International Conference on Hybrid Systems: Computation and Control. pp.
  23--32. HSCC '19, ACM, New York, NY, USA (2019)

\bibitem{chen2013flow}
Chen, X., {\'A}brah{\'a}m, E., Sankaranarayanan, S.: Flow*: An analyzer for
  non-linear hybrid systems. In: International Conference on Computer Aided
  Verification. pp. 258--263. Springer (2013)

\bibitem{claviere2021safety}
Clavi{\`e}re, A., Asselin, E., Garion, C., Pagetti, C.: Safety verification of
  neural network controlled systems. In: 2021 51st Annual IEEE/IFIP
  International Conference on Dependable Systems and Networks Workshops
  (DSN-W). pp. 47--54. IEEE (2021)

\bibitem{duggirala2016parsimonious}
Duggirala, P.S., Viswanathan, M.: Parsimonious, simulation based verification
  of linear systems. In: International Conference on Computer Aided
  Verification. pp. 477--494. Springer (2016)

\bibitem{dutta2019reachability}
Dutta, S., Chen, X., Sankaranarayanan, S.: Reachability analysis for neural
  feedback systems using regressive polynomial rule inference. In: Proceedings
  of the 22nd ACM International Conference on Hybrid Systems: Computation and
  Control. pp. 157--168 (2019)

\bibitem{forets2021conservative}
Forets, M., Schilling, C.: Conservative time discretization: A comparative
  study. arXiv preprint arXiv:2111.01454  (2021)

\bibitem{hagemann2014reachability}
Hagemann, W.: Reachability analysis of hybrid systems using symbolic orthogonal
  projections. In: International Conference on Computer Aided Verification. pp.
  407--423. Springer (2014)

\bibitem{han2006reachability}
Han, Z., Krogh, B.H.: Reachability analysis of large-scale affine systems using
  low-dimensional polytopes. In: International Workshop on Hybrid Systems:
  Computation and Control. pp. 287--301. Springer (2006)

\bibitem{huang2019reachnn}
Huang, C., Fan, J., Li, W., Chen, X., Zhu, Q.: Reachnn: Reachability analysis
  of neural-network controlled systems. ACM Transactions on Embedded Computing
  Systems (TECS)  \textbf{18}(5s),  1--22 (2019)

\bibitem{ivanov2019verisig}
Ivanov, R., Weimer, J., Alur, R., Pappas, G.J., Lee, I.: Verisig: verifying
  safety properties of hybrid systems with neural network controllers. In:
  Proceedings of the 22nd ACM International Conference on Hybrid Systems:
  Computation and Control. pp. 169--178 (2019)

\bibitem{jia2021verifying}
Jia, K., Rinard, M.: Verifying low-dimensional input neural networks via input
  quantization. In: International Static Analysis Symposium. pp. 206--214.
  Springer (2021)

\bibitem{johnson2021arch}
Johnson, T.T., Lopez, D.M., Benet, L., Forets, M., Guadalupe, S., Schilling,
  C., Ivanov, R., Carpenter, T.J., Weimer, J., Lee, I.: Arch-comp21 category
  report: Artificial intelligence and neural network control systems (ainncs)
  for continuous and hybrid systems plants. EPiC Series in Computing
  \textbf{80},  90--119 (2021)

\bibitem{julian2019guaranteeing}
Julian, K.D., Kochenderfer, M.J.: Guaranteeing safety for neural network-based
  aircraft collision avoidance systems. In: 2019 IEEE/AIAA 38th Digital
  Avionics Systems Conference (DASC). pp. 1--10. IEEE (2019)

\bibitem{julian2019deep}
Julian, K.D., Kochenderfer, M.J., Owen, M.P.: Deep neural network compression
  for aircraft collision avoidance systems. Journal of Guidance, Control, and
  Dynamics  \textbf{42}(3),  598--608 (2019)

\bibitem{julian2016policy}
Julian, K.D., Lopez, J., Brush, J.S., Owen, M.P., Kochenderfer, M.J.: Policy
  compression for aircraft collision avoidance systems. In: 2016 IEEE/AIAA 35th
  Digital Avionics Systems Conference (DASC). pp. 1--10. IEEE (2016)

\bibitem{katz2017reluplex}
Katz, G., Barrett, C., Dill, D.L., Julian, K., Kochenderfer, M.J.: Reluplex: An
  efficient {SMT} solver for verifying deep neural networks. In: International
  Conference on Computer Aided Verification. pp. 97--117. Springer (2017)

\bibitem{kochenderfer2011robust}
Kochenderfer, M.J., Chryssanthacopoulos, J.: Robust airborne collision
  avoidance through dynamic programming. Massachusetts Institute of Technology,
  Lincoln Laboratory, Project Report ATC-371  \textbf{130} (2011)

\bibitem{kochenderfer2010airspace}
Kochenderfer, M.J., Edwards, M.W., Espindle, L.P., Kuchar, J.K., Griffith,
  J.D.: Airspace encounter models for estimating collision risk. Journal of
  Guidance, Control, and Dynamics  \textbf{33}(2),  487--499 (2010)

\bibitem{liu2019algorithms}
Liu, C., Arnon, T., Lazarus, C., Barrett, C., Kochenderfer, M.J.: Algorithms
  for verifying deep neural networks. arXiv preprint arXiv:1903.06758  (2019)

\bibitem{lopez2021scitech}
Lopez, D.M., Johnson, T.T., Tran, H.D., Bak, S., Chen, X., Hobbs, K.:
  Verification of neural network compression of acas xu lookup tables with star
  set reachability. In: AIAA Scitech 2021 Forum. AIAA (January 2021)

\bibitem{marston2015acas}
Marston, M., Baca, G.: {ACAS-Xu} initial self-separation flight tests.
  \url{http://hdl.handle.net/2060/20150008347} (2015)

\bibitem{olson2015airborne}
Olson, W.A.: Airborne collision avoidance system x. Tech. rep., MASSACHUSETTS
  INST OF TECH LEXINGTON LINCOLN LAB (2015)

\bibitem{sadraddini2019linear}
Sadraddini, S., Tedrake, R.: Linear encodings for polytope containment
  problems. In: 2019 IEEE 58th Conference on Decision and Control (CDC). pp.
  4367--4372. IEEE (2019)

\bibitem{scott2016constrained}
Scott, J.K., Raimondo, D.M., Marseglia, G.R., Braatz, R.D.: Constrained
  zonotopes: A new tool for set-based estimation and fault detection.
  Automatica  \textbf{69},  126--136 (2016)

\bibitem{tran2019safety}
Tran, H.D., Cai, F., Diego, M.L., Musau, P., Johnson, T.T., Koutsoukos, X.:
  Safety verification of cyber-physical systems with reinforcement learning
  control. ACM Transactions on Embedded Computing Systems (TECS)
  \textbf{18}(5s),  1--22 (2019)

\bibitem{tran2019star}
Tran, H.D., Lopez, D.M., Musau, P., Yang, X., Nguyen, L.V., Xiang, W., Johnson,
  T.T.: Star-based reachability analysis of deep neural networks. In:
  International Symposium on Formal Methods. pp. 670--686. Springer (2019)

\bibitem{tran2020verification}
Tran, H.D., Xiang, W., Johnson, T.T.: Verification approaches for
  learning-enabled autonomous cyber-physical systems. IEEE Design \& Test
  (2020)

\bibitem{tran2020cav2}
Tran, H.D., Yang, X., Manzanas, D., Musau, P., Nguyen, L., Xiang, W., Bak, S.,
  Johnson, T.T.: Nnv: The neural network verification tool for deep neural
  networks and learning-enabled cyber-physical systems. In: Proceedings of the
  32nd International Conference on Computer Aided Verification. Springer (2020)

\bibitem{wang2018formal}
Wang, S., Pei, K., Whitehouse, J., Yang, J., Jana, S.: Formal security analysis
  of neural networks using symbolic intervals. In: 27th {USENIX} Security
  Symposium. pp. 1599--1614 (2018)

\bibitem{zombori2020fooling}
Zombori, D., B{\'a}nhelyi, B., Csendes, T., Megyeri, I., Jelasity, M.: Fooling
  a complete neural network verifier. In: International Conference on Learning
  Representations (2020)

\end{thebibliography}

\ifarxiv
\appendix
\section{Unsafe Counterexample Details}
\label{sec:counterexamples}
Here, we provide detailed traces on the unsafe counterexamples found in the original unquantized neural network control system shown previously in Figure~\ref{fig:unsafe_square}.

\subsection{Counterexample Outside Range}
\label{apx:outside_range}

\begin{figure}[h]
    \centering
    \includegraphics[width=\columnwidth]{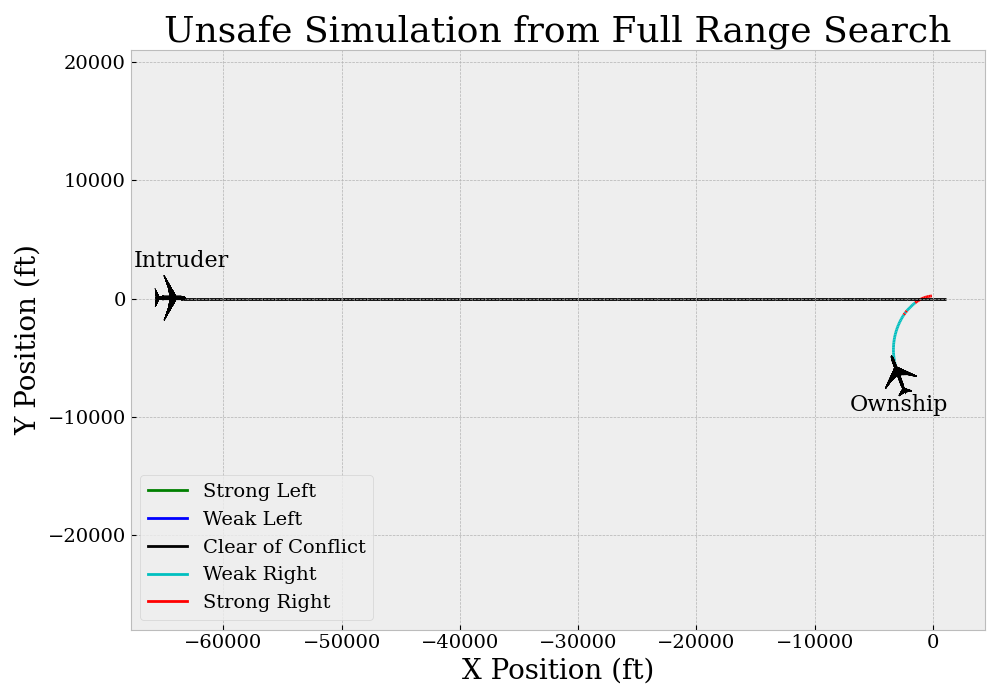}
    \caption{The network advises the ownship to turn immediately once the intruder is in range, when $\rho$ becomes less than 60760 ft, but a collision still occurs.}
    \label{fig:first_counterexample}
\end{figure}

When doing a full proof of safety of the system, counterexamples were generated for the original, nonquantized system.
In this scenario, the planes were at the same altitude, $\dot{\tau} = 0$, and the system commanded a turn as soon as the intruder was in range.
Nonetheless a collision occurred.
This indicates perhaps a larger range for $\rho$ should be considered, which would require retraining the network with more data in this range.
A visualization of the counterexample is shown in Figure~\ref{fig:first_counterexample}.
A video of the simulation is online: \url{https://youtu.be/WZWPKwIXfcM}.
The full trace is provided in Table~\ref{tab:first_unsafe}.
Row 3 of the table show the system issuing a turn command as soon as the distance between aircraft $\rho$ becomes less than 60760 ft.

\begin{center}
{\setlength{\tabcolsep}{5pt}
\scriptsize
\begin{longtable}{@{}lllllrrr@{}}
\caption{Unsafe case with immediate command. The initial state is $\rho$ = 62001.19897399513 ft, $\theta$ = 1.105638365566048 rad, $\psi=-1.9313853026445638$ rad, $v_{own}$ = 140.4154485909307 ft/sec, and $v_{int}$ = 1113.19526 ft/sec.} \label{tab:first_unsafe} \\
\toprule
Step & $\alpha_\text{prev}$ & Cmd & $\rho$ (ft) & $\theta$ (deg) & $\psi$ (deg) \\
\midrule
1 & \textsc{coc} & \textsc{coc} & 62001.2 & 63.35 & -110.66 \\
2 & \textsc{coc} & \textsc{coc} & 60831.1 & 63.36 & -110.66 \\
\rowcolor{Red}
3 & \textsc{coc} & \textsc{wr} & 59661.0 & 63.37 & -110.66 \\
4 & \textsc{wr} & \textsc{wr} & 58492.6 & 64.88 & -109.16 \\
5 & \textsc{wr} & \textsc{wr} & 57327.4 & 66.39 & -107.66 \\
6 & \textsc{wr} & \textsc{wr} & 56165.7 & 67.90 & -106.16 \\
7 & \textsc{wr} & \textsc{wr} & 55007.4 & 69.42 & -104.66 \\
8 & \textsc{wr} & \textsc{wr} & 53852.5 & 70.94 & -103.16 \\
9 & \textsc{wr} & \textsc{wr} & 52701.1 & 72.46 & -101.66 \\
10 & \textsc{wr} & \textsc{wr} & 51553.2 & 73.98 & -100.16 \\
11 & \textsc{wr} & \textsc{wr} & 50408.8 & 75.51 & -98.66 \\
12 & \textsc{wr} & \textsc{wr} & 49268.0 & 77.03 & -97.16 \\
13 & \textsc{wr} & \textsc{wr} & 48130.8 & 78.56 & -95.66 \\
14 & \textsc{wr} & \textsc{wr} & 46997.3 & 80.09 & -94.16 \\
15 & \textsc{wr} & \textsc{wr} & 45867.3 & 81.63 & -92.66 \\
16 & \textsc{wr} & \textsc{wr} & 44741.0 & 83.16 & -91.16 \\
17 & \textsc{wr} & \textsc{wr} & 43618.4 & 84.70 & -89.66 \\
18 & \textsc{wr} & \textsc{wr} & 42499.5 & 86.24 & -88.16 \\
19 & \textsc{wr} & \textsc{wr} & 41384.3 & 87.79 & -86.66 \\
20 & \textsc{wr} & \textsc{wr} & 40272.7 & 89.33 & -85.16 \\
21 & \textsc{wr} & \textsc{wr} & 39164.9 & 90.88 & -83.66 \\
22 & \textsc{wr} & \textsc{wr} & 38060.7 & 92.44 & -82.16 \\
23 & \textsc{wr} & \textsc{wr} & 36960.3 & 93.99 & -80.66 \\
24 & \textsc{wr} & \textsc{wr} & 35863.6 & 95.55 & -79.16 \\
25 & \textsc{wr} & \textsc{wr} & 34770.6 & 97.11 & -77.66 \\
26 & \textsc{wr} & \textsc{wr} & 33681.2 & 98.67 & -76.16 \\
27 & \textsc{wr} & \textsc{wr} & 32595.6 & 100.24 & -74.66 \\
28 & \textsc{wr} & \textsc{wr} & 31513.6 & 101.81 & -73.16 \\
29 & \textsc{wr} & \textsc{wr} & 30435.2 & 103.38 & -71.66 \\
30 & \textsc{wr} & \textsc{wr} & 29360.5 & 104.96 & -70.16 \\
31 & \textsc{wr} & \textsc{wr} & 28289.4 & 106.54 & -68.66 \\
32 & \textsc{wr} & \textsc{wr} & 27221.9 & 108.13 & -67.16 \\
33 & \textsc{wr} & \textsc{wr} & 26157.9 & 109.72 & -65.66 \\
34 & \textsc{wr} & \textsc{wr} & 25097.5 & 111.32 & -64.16 \\
35 & \textsc{wr} & \textsc{wr} & 24040.5 & 112.92 & -62.66 \\
36 & \textsc{wr} & \textsc{wr} & 22987.0 & 114.53 & -61.16 \\
37 & \textsc{wr} & \textsc{wr} & 21937.0 & 116.14 & -59.66 \\
38 & \textsc{wr} & \textsc{wr} & 20890.3 & 117.76 & -58.16 \\
39 & \textsc{wr} & \textsc{sr} & 19847.0 & 119.39 & -56.66 \\
40 & \textsc{sr} & \textsc{wr} & 18808.6 & 122.52 & -53.66 \\
41 & \textsc{wr} & \textsc{sr} & 17775.0 & 124.16 & -52.16 \\
42 & \textsc{sr} & \textsc{wr} & 16746.0 & 127.30 & -49.16 \\
43 & \textsc{wr} & \textsc{wr} & 15721.5 & 128.96 & -47.66 \\
44 & \textsc{wr} & \textsc{wr} & 14700.0 & 130.62 & -46.16 \\
45 & \textsc{wr} & \textsc{wr} & 13681.4 & 132.30 & -44.66 \\
46 & \textsc{wr} & \textsc{wr} & 12665.7 & 134.00 & -43.16 \\
47 & \textsc{wr} & \textsc{wr} & 11652.8 & 135.72 & -41.66 \\
48 & \textsc{wr} & \textsc{wr} & 10642.7 & 137.46 & -40.16 \\
49 & \textsc{wr} & \textsc{sr} & 9635.3 & 139.25 & -38.66 \\
50 & \textsc{sr} & \textsc{sr} & 8631.7 & 142.57 & -35.66 \\
51 & \textsc{sr} & \textsc{sr} & 7632.8 & 145.92 & -32.66 \\
52 & \textsc{sr} & \textsc{sr} & 6638.5 & 149.34 & -29.66 \\
53 & \textsc{sr} & \textsc{sr} & 5648.3 & 152.84 & -26.66 \\
54 & \textsc{sr} & \textsc{sr} & 4661.9 & 156.46 & -23.66 \\
55 & \textsc{sr} & \textsc{sr} & 3679.3 & 160.32 & -20.66 \\
56 & \textsc{sr} & \textsc{sr} & 2700.4 & 164.66 & -17.66 \\
57 & \textsc{sr} & \textsc{sr} & 1726.2 & 170.27 & -14.66 \\
58 & \textsc{sr} & \textsc{sr} & 764.9 & -178.03 & -11.66 \\
59 & \textsc{sr} & \textsc{sr} & 309.3 & -50.16 & -8.66 \\
\bottomrule
\end{longtable}
}
\end{center}
\subsection{Counterexample with Safety System Causing Collision}
\label{apx:causecrash}

\begin{figure}[h]
    \centering
    \includegraphics[width=\columnwidth]{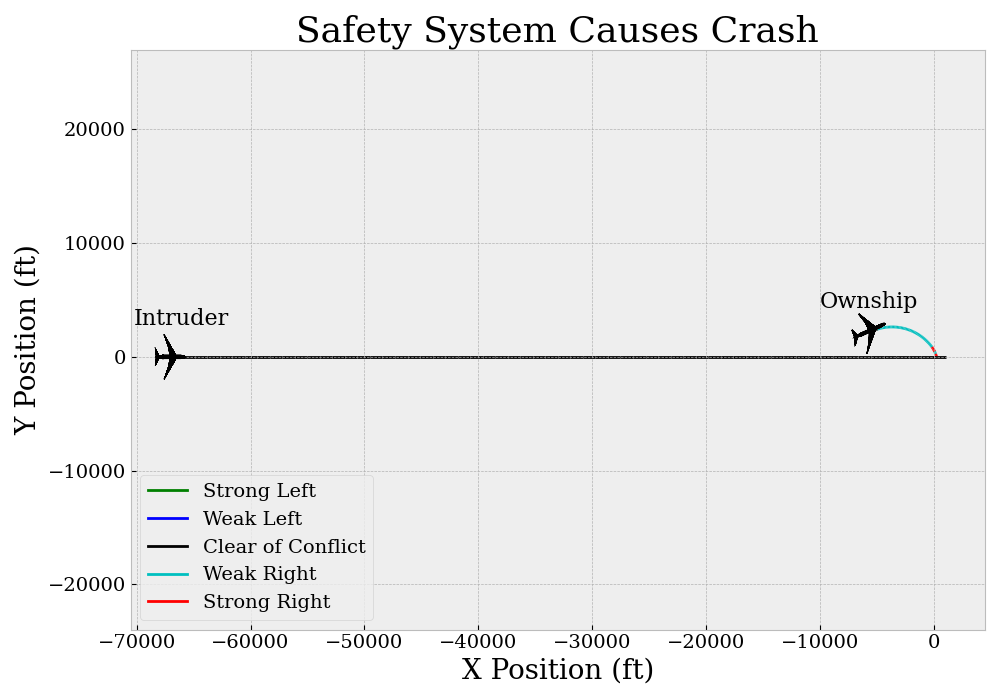}
    \caption{The safety system advises the ownship to turn into the path of the intruder, causing a collision that would have otherwise not occurred.}
    \label{fig:causecrash_counterexample}
\end{figure}

In this scenario, the planes are at the same altitude, $\dot{\tau} = 0$, and the ownship has cleared the path of the intruder.
Nonetheless, the system advises the ownship to turn, which eventually leads it back into the path of the intruder, causing a near mid-air collision.
A visualization of the unsafe scenario is shown in Figure~\ref{fig:causecrash_counterexample}.
A video of the simulation is online: \url{https://youtu.be/dDwRiv_Kh2M}.
The full trace is provided in Table~\ref{tab:causecrash}.
The unrounded initial state is $\rho$ = 61462.16874158125 ft, $\theta$ = 2.8797448888478536 rad, $\psi=-0.2973898012094359$ rad, with velocities $v_{own}$ = 114.27575493691512 ft/sec, and $v_{int}$ = 1100.31313 ft/sec.

\clearpage
\begin{center}
{\setlength{\tabcolsep}{5pt}
\scriptsize
\begin{longtable}{@{}lllllrrr@{}}
\caption{Unsafe case where the safety system causes a crash.} \label{tab:causecrash} \\
\toprule
Step & $\alpha_\text{prev}$ & Cmd & $\rho$ (ft) & $\theta$ (deg) & $\psi$ (deg) \\
\midrule
1 & \textsc{coc} & \textsc{coc} & 61462.2 & 165.00 & -17.04 \\
2 & \textsc{coc} & \textsc{coc} & 60473.0 & 165.06 & -17.04 \\
3 & \textsc{coc} & \textsc{coc} & 59483.9 & 165.13 & -17.04 \\
4 & \textsc{coc} & \textsc{coc} & 58494.8 & 165.20 & -17.04 \\
5 & \textsc{coc} & \textsc{coc} & 57505.9 & 165.27 & -17.04 \\
6 & \textsc{coc} & \textsc{coc} & 56517.0 & 165.35 & -17.04 \\
7 & \textsc{coc} & \textsc{coc} & 55528.3 & 165.42 & -17.04 \\
8 & \textsc{coc} & \textsc{wr} & 54539.6 & 165.50 & -17.04 \\
9 & \textsc{wr} & \textsc{wr} & 53551.4 & 167.08 & -15.54 \\
10 & \textsc{wr} & \textsc{wr} & 52564.0 & 168.66 & -14.04 \\
11 & \textsc{wr} & \textsc{wr} & 51577.3 & 170.25 & -12.54 \\
12 & \textsc{wr} & \textsc{wr} & 50591.2 & 171.83 & -11.04 \\
13 & \textsc{wr} & \textsc{wr} & 49605.7 & 173.41 & -9.54 \\
14 & \textsc{wr} & \textsc{wr} & 48620.5 & 174.99 & -8.04 \\
15 & \textsc{wr} & \textsc{wr} & 47635.8 & 176.57 & -6.54 \\
16 & \textsc{wr} & \textsc{wr} & 46651.3 & 178.15 & -5.04 \\
17 & \textsc{wr} & \textsc{wr} & 45666.9 & 179.73 & -3.54 \\
18 & \textsc{wr} & \textsc{wr} & 44682.7 & -178.69 & -2.04 \\
19 & \textsc{wr} & \textsc{wr} & 43698.5 & -177.12 & -0.54 \\
20 & \textsc{wr} & \textsc{wr} & 42714.3 & -175.54 & 0.96 \\
21 & \textsc{wr} & \textsc{wr} & 41729.8 & -173.96 & 2.46 \\
22 & \textsc{wr} & \textsc{wr} & 40745.2 & -172.38 & 3.96 \\
23 & \textsc{wr} & \textsc{wr} & 39760.2 & -170.80 & 5.46 \\
24 & \textsc{wr} & \textsc{wr} & 38774.8 & -169.23 & 6.96 \\
25 & \textsc{wr} & \textsc{wr} & 37788.9 & -167.65 & 8.46 \\
26 & \textsc{wr} & \textsc{wr} & 36802.5 & -166.08 & 9.96 \\
27 & \textsc{wr} & \textsc{wr} & 35815.4 & -164.50 & 11.46 \\
28 & \textsc{wr} & \textsc{wr} & 34827.5 & -162.92 & 12.96 \\
29 & \textsc{wr} & \textsc{wr} & 33838.9 & -161.35 & 14.46 \\
30 & \textsc{wr} & \textsc{wr} & 32849.3 & -159.78 & 15.96 \\
31 & \textsc{wr} & \textsc{wr} & 31858.8 & -158.20 & 17.46 \\
32 & \textsc{wr} & \textsc{wr} & 30867.2 & -156.63 & 18.96 \\
33 & \textsc{wr} & \textsc{wr} & 29874.4 & -155.06 & 20.46 \\
34 & \textsc{wr} & \textsc{wr} & 28880.5 & -153.48 & 21.96 \\
35 & \textsc{wr} & \textsc{wr} & 27885.2 & -151.91 & 23.46 \\
36 & \textsc{wr} & \textsc{wr} & 26888.6 & -150.34 & 24.96 \\
37 & \textsc{wr} & \textsc{wr} & 25890.6 & -148.77 & 26.46 \\
38 & \textsc{wr} & \textsc{wr} & 24891.0 & -147.20 & 27.96 \\
39 & \textsc{wr} & \textsc{wr} & 23889.9 & -145.63 & 29.46 \\
40 & \textsc{wr} & \textsc{wr} & 22887.1 & -144.06 & 30.96 \\
41 & \textsc{wr} & \textsc{wr} & 21882.6 & -142.48 & 32.46 \\
42 & \textsc{wr} & \textsc{wr} & 20876.3 & -140.91 & 33.96 \\
43 & \textsc{wr} & \textsc{wr} & 19868.2 & -139.34 & 35.46 \\
44 & \textsc{wr} & \textsc{wr} & 18858.1 & -137.77 & 36.96 \\
45 & \textsc{wr} & \textsc{wr} & 17846.0 & -136.19 & 38.46 \\
46 & \textsc{wr} & \textsc{wr} & 16832.0 & -134.62 & 39.96 \\
47 & \textsc{wr} & \textsc{wr} & 15815.8 & -133.04 & 41.46 \\
48 & \textsc{wr} & \textsc{wr} & 14797.4 & -131.46 & 42.96 \\
49 & \textsc{wr} & \textsc{wr} & 13776.9 & -129.87 & 44.46 \\
50 & \textsc{wr} & \textsc{wr} & 12754.1 & -128.28 & 45.96 \\
51 & \textsc{wr} & \textsc{wr} & 11728.9 & -126.69 & 47.46 \\
52 & \textsc{wr} & \textsc{wr} & 10701.4 & -125.08 & 48.96 \\
53 & \textsc{wr} & \textsc{wr} & 9671.5 & -123.46 & 50.46 \\
54 & \textsc{wr} & \textsc{wr} & 8639.2 & -121.83 & 51.96 \\
55 & \textsc{wr} & \textsc{sr} & 7604.4 & -120.17 & 53.46 \\
56 & \textsc{sr} & \textsc{sr} & 6565.7 & -116.98 & 56.46 \\
57 & \textsc{sr} & \textsc{sr} & 5521.8 & -113.74 & 59.46 \\
58 & \textsc{sr} & \textsc{sr} & 4472.5 & -110.43 & 62.46 \\
59 & \textsc{sr} & \textsc{wr} & 3417.8 & -106.96 & 65.46 \\
60 & \textsc{wr} & \textsc{wr} & 2359.3 & -104.60 & 66.96 \\
61 & \textsc{wr} & \textsc{sr} & 1299.3 & -100.87 & 68.46 \\
62 & \textsc{sr} & \textsc{sr} & 253.5 & -76.83 & 71.46 \\
\bottomrule
\end{longtable}
}
\end{center}
\subsection{Counterexample with Fast Ownship}
\label{apx:fast_ownship}

\begin{figure}[h]
    \centering
    \includegraphics[width=\columnwidth]{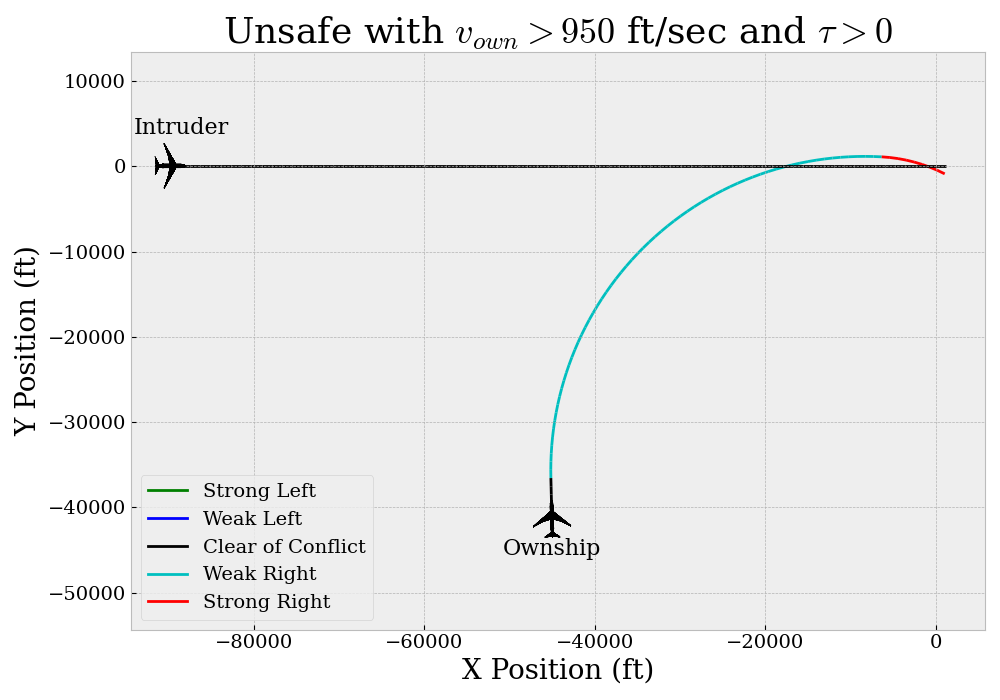}
    \caption{Unsafe case with fast ownship ($v_{own} = 964.1$ ft/sec) and $\tau_\text{init} = 75$ sec.}
    \label{fig:fast_counterexample}
\end{figure}

Using the described quantized backreachability approach, we found an unsafe situation with $v_{own} > 950$ ft/sec.
In this case, the system was safe for in-plane flight, when $\dot{\tau} = 0$, but unsafe when there was a difference in vertical velocity, $\dot{\tau} = -1$.
A visualization of the counterexample is shown in Figure~\ref{fig:fast_counterexample}.
A video of the simulation is online: \url{https://youtu.be/F_bykLR9lJw}.
From the visualization, it looks like the collision was avoidable if the system did not continue to advise a right turn for the final few seconds.
The full trace is provided in Table~\ref{tab:fast_own_unsafe}.

\clearpage
\begin{center}
{\setlength{\tabcolsep}{5pt}
\scriptsize
\begin{longtable}{@{}lllllrrr@{}}
\caption{Unsafe case with fast ownship with $v_{own} = 964.1$. The full initial state is $\rho$ = 61019.45806978694 ft, $\theta$ = 0.8007909138337812 rad, $\psi=-1.5953555128455696$ rad, $v_{own}$ = 964.0586611224201 ft/sec, and $v_{int}$ = 1198.4375 ft/sec.} \label{tab:fast_own_unsafe} \\
\toprule
Step & $\alpha_\text{prev}$ & $\tau$ & Net & Cmd & $\rho$ (ft) & $\theta$ (deg) & $\psi$ (deg) \\
\midrule
1 & \textsc{coc} & 75 & $N_{1,8}$ & \textsc{coc} & 61019.5 & 45.88 & -91.41 \\
2 & \textsc{coc} & 74 & $N_{1,8}$ & \textsc{coc} & 59467.9 & 45.77 & -91.41 \\
3 & \textsc{coc} & 73 & $N_{1,8}$ & \textsc{coc} & 57916.5 & 45.64 & -91.41 \\
4 & \textsc{coc} & 72 & $N_{1,8}$ & \textsc{coc} & 56365.5 & 45.51 & -91.41 \\
5 & \textsc{coc} & 71 & $N_{1,8}$ & \textsc{coc} & 54814.7 & 45.38 & -91.41 \\
6 & \textsc{coc} & 70 & $N_{1,7}$ & \textsc{wr} & 53264.3 & 45.23 & -91.41 \\
7 & \textsc{wr} & 69 & $N_{3,7}$ & \textsc{wr} & 51723.3 & 46.59 & -89.91 \\
8 & \textsc{wr} & 68 & $N_{3,7}$ & \textsc{wr} & 50200.9 & 47.96 & -88.41 \\
9 & \textsc{wr} & 67 & $N_{3,7}$ & \textsc{wr} & 48697.4 & 49.34 & -86.91 \\
10 & \textsc{wr} & 66 & $N_{3,7}$ & \textsc{wr} & 47213.4 & 50.73 & -85.41 \\
11 & \textsc{wr} & 65 & $N_{3,7}$ & \textsc{wr} & 45749.0 & 52.13 & -83.91 \\
12 & \textsc{wr} & 64 & $N_{3,7}$ & \textsc{wr} & 44304.6 & 53.55 & -82.41 \\
13 & \textsc{wr} & 63 & $N_{3,7}$ & \textsc{wr} & 42880.6 & 54.98 & -80.91 \\
14 & \textsc{wr} & 62 & $N_{3,7}$ & \textsc{wr} & 41477.4 & 56.43 & -79.41 \\
15 & \textsc{wr} & 61 & $N_{3,7}$ & \textsc{wr} & 40095.1 & 57.90 & -77.91 \\
16 & \textsc{wr} & 60 & $N_{3,7}$ & \textsc{wr} & 38734.3 & 59.38 & -76.41 \\
17 & \textsc{wr} & 59 & $N_{3,7}$ & \textsc{wr} & 37395.2 & 60.88 & -74.91 \\
18 & \textsc{wr} & 58 & $N_{3,7}$ & \textsc{wr} & 36078.2 & 62.40 & -73.41 \\
19 & \textsc{wr} & 57 & $N_{3,7}$ & \textsc{wr} & 34783.5 & 63.94 & -71.91 \\
20 & \textsc{wr} & 56 & $N_{3,7}$ & \textsc{wr} & 33511.5 & 65.50 & -70.41 \\
21 & \textsc{wr} & 55 & $N_{3,6}$ & \textsc{wr} & 32262.5 & 67.09 & -68.91 \\
22 & \textsc{wr} & 54 & $N_{3,6}$ & \textsc{wr} & 31036.9 & 68.70 & -67.41 \\
23 & \textsc{wr} & 53 & $N_{3,6}$ & \textsc{wr} & 29835.0 & 70.34 & -65.91 \\
24 & \textsc{wr} & 52 & $N_{3,6}$ & \textsc{wr} & 28657.0 & 72.00 & -64.41 \\
25 & \textsc{wr} & 51 & $N_{3,6}$ & \textsc{wr} & 27503.3 & 73.70 & -62.91 \\
26 & \textsc{wr} & 50 & $N_{3,6}$ & \textsc{wr} & 26374.2 & 75.43 & -61.41 \\
27 & \textsc{wr} & 49 & $N_{3,6}$ & \textsc{wr} & 25270.0 & 77.19 & -59.91 \\
28 & \textsc{wr} & 48 & $N_{3,6}$ & \textsc{wr} & 24191.1 & 78.99 & -58.41 \\
29 & \textsc{wr} & 47 & $N_{3,6}$ & \textsc{wr} & 23137.7 & 80.83 & -56.91 \\
30 & \textsc{wr} & 46 & $N_{3,6}$ & \textsc{wr} & 22110.1 & 82.71 & -55.41 \\
31 & \textsc{wr} & 45 & $N_{3,6}$ & \textsc{wr} & 21108.6 & 84.64 & -53.91 \\
32 & \textsc{wr} & 44 & $N_{3,6}$ & \textsc{wr} & 20133.6 & 86.62 & -52.41 \\
33 & \textsc{wr} & 43 & $N_{3,6}$ & \textsc{wr} & 19185.4 & 88.65 & -50.91 \\
34 & \textsc{wr} & 42 & $N_{3,6}$ & \textsc{wr} & 18264.1 & 90.73 & -49.41 \\
35 & \textsc{wr} & 41 & $N_{3,6}$ & \textsc{wr} & 17370.3 & 92.88 & -47.91 \\
36 & \textsc{wr} & 40 & $N_{3,6}$ & \textsc{wr} & 16504.0 & 95.09 & -46.41 \\
37 & \textsc{wr} & 39 & $N_{3,6}$ & \textsc{wr} & 15665.7 & 97.37 & -44.91 \\
38 & \textsc{wr} & 38 & $N_{3,6}$ & \textsc{wr} & 14855.5 & 99.72 & -43.41 \\
39 & \textsc{wr} & 37 & $N_{3,6}$ & \textsc{wr} & 14073.9 & 102.15 & -41.91 \\
40 & \textsc{wr} & 36 & $N_{3,6}$ & \textsc{wr} & 13320.9 & 104.67 & -40.41 \\
41 & \textsc{wr} & 35 & $N_{3,5}$ & \textsc{wr} & 12597.0 & 107.27 & -38.91 \\
42 & \textsc{wr} & 34 & $N_{3,5}$ & \textsc{wr} & 11902.2 & 109.98 & -37.41 \\
43 & \textsc{wr} & 33 & $N_{3,5}$ & \textsc{wr} & 11236.9 & 112.78 & -35.91 \\
44 & \textsc{wr} & 32 & $N_{3,5}$ & \textsc{wr} & 10601.1 & 115.69 & -34.41 \\
45 & \textsc{wr} & 31 & $N_{3,5}$ & \textsc{wr} & 9995.0 & 118.72 & -32.91 \\
46 & \textsc{wr} & 30 & $N_{3,5}$ & \textsc{wr} & 9418.6 & 121.86 & -31.41 \\
47 & \textsc{wr} & 29 & $N_{3,5}$ & \textsc{wr} & 8872.0 & 125.12 & -29.91 \\
48 & \textsc{wr} & 28 & $N_{3,5}$ & \textsc{wr} & 8355.0 & 128.51 & -28.41 \\
49 & \textsc{wr} & 27 & $N_{3,5}$ & \textsc{wr} & 7867.4 & 132.02 & -26.91 \\
50 & \textsc{wr} & 26 & $N_{3,5}$ & \textsc{wr} & 7409.0 & 135.66 & -25.41 \\
51 & \textsc{wr} & 25 & $N_{3,5}$ & \textsc{wr} & 6979.1 & 139.43 & -23.91 \\
52 & \textsc{wr} & 24 & $N_{3,5}$ & \textsc{wr} & 6577.1 & 143.31 & -22.41 \\
53 & \textsc{wr} & 23 & $N_{3,5}$ & \textsc{wr} & 6202.1 & 147.31 & -20.91 \\
54 & \textsc{wr} & 22 & $N_{3,5}$ & \textsc{wr} & 5853.0 & 151.41 & -19.41 \\
55 & \textsc{wr} & 21 & $N_{3,5}$ & \textsc{wr} & 5528.4 & 155.59 & -17.91 \\
56 & \textsc{wr} & 20 & $N_{3,5}$ & \textsc{wr} & 5226.7 & 159.85 & -16.41 \\
57 & \textsc{wr} & 19 & $N_{3,5}$ & \textsc{wr} & 4946.0 & 164.15 & -14.91 \\
58 & \textsc{wr} & 18 & $N_{3,5}$ & \textsc{wr} & 4684.2 & 168.48 & -13.41 \\
59 & \textsc{wr} & 17 & $N_{3,5}$ & \textsc{wr} & 4438.9 & 172.82 & -11.91 \\
60 & \textsc{wr} & 16 & $N_{3,5}$ & \textsc{wr} & 4207.6 & 177.13 & -10.41 \\
61 & \textsc{wr} & 15 & $N_{3,4}$ & \textsc{wr} & 3987.6 & -178.59 & -8.91 \\
62 & \textsc{wr} & 14 & $N_{3,4}$ & \textsc{wr} & 3776.1 & -174.39 & -7.41 \\
63 & \textsc{wr} & 13 & $N_{3,4}$ & \textsc{wr} & 3570.3 & -170.27 & -5.91 \\
64 & \textsc{wr} & 12 & $N_{3,4}$ & \textsc{wr} & 3367.4 & -166.25 & -4.41 \\
65 & \textsc{wr} & 11 & $N_{3,4}$ & \textsc{wr} & 3164.7 & -162.36 & -2.91 \\
66 & \textsc{wr} & 10 & $N_{3,4}$ & \textsc{wr} & 2959.6 & -158.61 & -1.41 \\
67 & \textsc{wr} & 9 & $N_{3,4}$ & \textsc{wr} & 2749.6 & -155.00 & 0.09 \\
68 & \textsc{wr} & 8 & $N_{3,4}$ & \textsc{wr} & 2532.4 & -151.56 & 1.59 \\
69 & \textsc{wr} & 7 & $N_{3,3}$ & \textsc{sr} & 2305.8 & -148.29 & 3.09 \\
70 & \textsc{sr} & 6 & $N_{5,3}$ & \textsc{sr} & 2060.8 & -144.01 & 6.09 \\
71 & \textsc{sr} & 5 & $N_{5,3}$ & \textsc{sr} & 1786.7 & -140.67 & 9.09 \\
72 & \textsc{sr} & 4 & $N_{5,3}$ & \textsc{sr} & 1480.9 & -138.70 & 12.09 \\
73 & \textsc{sr} & 3 & $N_{5,2}$ & \textsc{sr} & 1144.3 & -139.22 & 15.09 \\
74 & \textsc{sr} & 2 & $N_{5,2}$ & \textsc{sr} & 788.1 & -145.64 & 18.09 \\
75 & \textsc{sr} & 1 & $N_{5,2}$ & \textsc{sr} & 477.4 & -171.30 & 21.09 \\
76 & \textsc{sr} & 0 & $N_{5,1}$ & \textsc{sr} & 498.5 & 132.55 & 24.09 \\
\bottomrule

\end{longtable}
}
\end{center}
\subsection{Counterexample with $v_{int} < 400$ ft/sec}
\label{apx:slow_intruder}

\begin{figure}[h]
    \centering
    \includegraphics[width=\columnwidth]{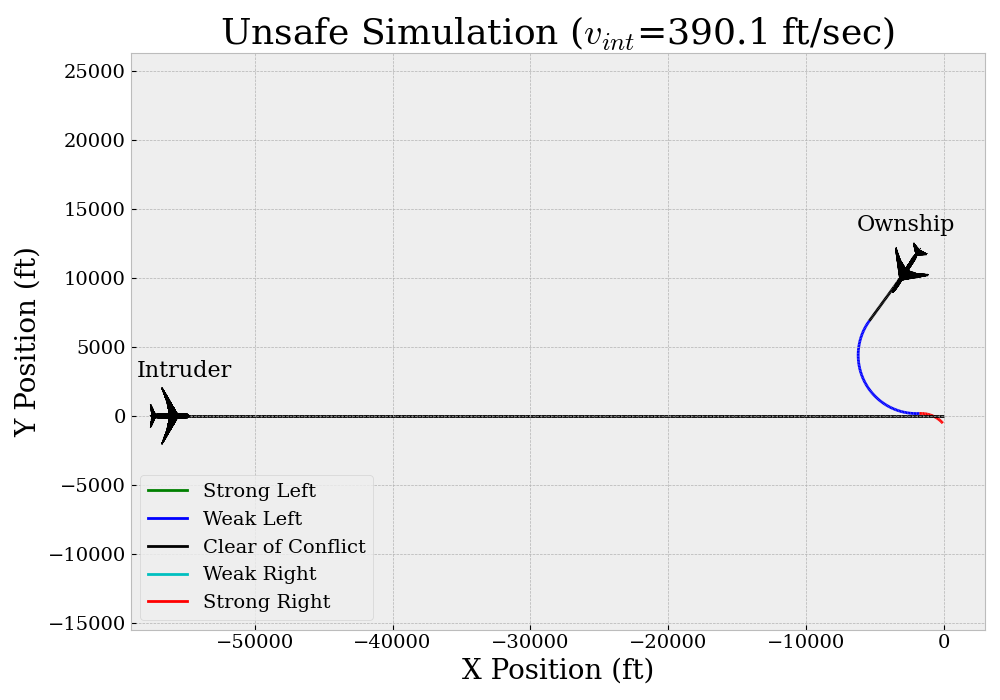}
    \caption{Unsafe case with slow intruder. Aircraft are shown at the initial positions.}
    \label{fig:slow_counterexample}
\end{figure}

Using the described quantized backreachability approach, we found an unsafe situation with $v_{int} < 400$ ft/sec.
A video of the simulation is online, \url{https://youtu.be/7B_-k0qpZTo}, and a visualization of the counterexample is shown in Figure~\ref{fig:slow_counterexample}.

The 159 second trace is provided in Table~\ref{tab:slow_int_unsafe}.
Examining the trace, an interesting observation is that the command switch from weak-left to strong right at step 142 corresponds to the relative position angle $\theta$ wrapping from $-\pi$ to $\pi$.
This discontinuity in the network input between successive steps is likely the cause of the eventual near mid-air collision.

\begin{center}
{\setlength{\tabcolsep}{5pt}
\scriptsize
\begin{longtable}{@{}lllrrr@{}}
\caption{Unsafe case with slow intruder with velocity $v_{int} = 390.1$ ft/sec and in-plane flight ($\tau = 0$ and $\dot{\tau} = 0$). The unrounded initial state is $\rho$ = 60959.597800102 ft, $\theta$ = -0.7461997148243538 rad, $\psi=2.1997877266124295$ rad, $v_{own}$ = 110.84814862335269 ft/sec, and $v_{int}$ = 390.10329256 ft/sec.}\label{tab:slow_int_unsafe} \\
\toprule
Step & $\alpha_\text{prev}$ & Cmd & $\rho$ (ft) & $\theta$ (deg) & $\psi$ (deg) \\
\midrule
1 & \textsc{coc} & \textsc{coc} & 60959.6 & -42.75 & 126.04 \\
2 & \textsc{coc} & \textsc{coc} & 60495.5 & -42.75 & 126.04 \\
3 & \textsc{coc} & \textsc{coc} & 60031.5 & -42.75 & 126.04 \\
4 & \textsc{coc} & \textsc{coc} & 59567.4 & -42.75 & 126.04 \\
5 & \textsc{coc} & \textsc{coc} & 59103.4 & -42.75 & 126.04 \\
6 & \textsc{coc} & \textsc{coc} & 58639.3 & -42.75 & 126.04 \\
7 & \textsc{coc} & \textsc{coc} & 58175.3 & -42.75 & 126.04 \\
8 & \textsc{coc} & \textsc{coc} & 57711.2 & -42.75 & 126.04 \\
9 & \textsc{coc} & \textsc{coc} & 57247.1 & -42.75 & 126.04 \\
10 & \textsc{coc} & \textsc{coc} & 56783.1 & -42.75 & 126.04 \\
11 & \textsc{coc} & \textsc{coc} & 56319.0 & -42.75 & 126.04 \\
12 & \textsc{coc} & \textsc{coc} & 55855.0 & -42.75 & 126.04 \\
13 & \textsc{coc} & \textsc{coc} & 55390.9 & -42.75 & 126.04 \\
14 & \textsc{coc} & \textsc{coc} & 54926.9 & -42.75 & 126.04 \\
15 & \textsc{coc} & \textsc{coc} & 54462.8 & -42.75 & 126.04 \\
16 & \textsc{coc} & \textsc{coc} & 53998.7 & -42.75 & 126.04 \\
17 & \textsc{coc} & \textsc{coc} & 53534.7 & -42.74 & 126.04 \\
18 & \textsc{coc} & \textsc{coc} & 53070.6 & -42.74 & 126.04 \\
19 & \textsc{coc} & \textsc{coc} & 52606.6 & -42.74 & 126.04 \\
20 & \textsc{coc} & \textsc{coc} & 52142.5 & -42.74 & 126.04 \\
21 & \textsc{coc} & \textsc{coc} & 51678.5 & -42.74 & 126.04 \\
22 & \textsc{coc} & \textsc{coc} & 51214.4 & -42.74 & 126.04 \\
23 & \textsc{coc} & \textsc{coc} & 50750.3 & -42.74 & 126.04 \\
24 & \textsc{coc} & \textsc{coc} & 50286.3 & -42.74 & 126.04 \\
25 & \textsc{coc} & \textsc{coc} & 49822.2 & -42.74 & 126.04 \\
26 & \textsc{coc} & \textsc{coc} & 49358.2 & -42.74 & 126.04 \\
27 & \textsc{coc} & \textsc{coc} & 48894.1 & -42.74 & 126.04 \\
28 & \textsc{coc} & \textsc{coc} & 48430.1 & -42.74 & 126.04 \\
29 & \textsc{coc} & \textsc{coc} & 47966.0 & -42.73 & 126.04 \\
30 & \textsc{coc} & \textsc{coc} & 47501.9 & -42.73 & 126.04 \\
31 & \textsc{coc} & \textsc{coc} & 47037.9 & -42.73 & 126.04 \\
32 & \textsc{coc} & \textsc{coc} & 46573.8 & -42.73 & 126.04 \\
33 & \textsc{coc} & \textsc{coc} & 46109.8 & -42.73 & 126.04 \\
34 & \textsc{coc} & \textsc{coc} & 45645.7 & -42.73 & 126.04 \\
35 & \textsc{coc} & \textsc{coc} & 45181.7 & -42.73 & 126.04 \\
36 & \textsc{coc} & \textsc{coc} & 44717.6 & -42.73 & 126.04 \\
37 & \textsc{coc} & \textsc{coc} & 44253.5 & -42.73 & 126.04 \\
38 & \textsc{coc} & \textsc{coc} & 43789.5 & -42.73 & 126.04 \\
39 & \textsc{coc} & \textsc{coc} & 43325.4 & -42.73 & 126.04 \\
40 & \textsc{coc} & \textsc{coc} & 42861.4 & -42.72 & 126.04 \\
41 & \textsc{coc} & \textsc{coc} & 42397.3 & -42.72 & 126.04 \\
42 & \textsc{coc} & \textsc{coc} & 41933.3 & -42.72 & 126.04 \\
43 & \textsc{coc} & \textsc{coc} & 41469.2 & -42.72 & 126.04 \\
44 & \textsc{coc} & \textsc{coc} & 41005.2 & -42.72 & 126.04 \\
45 & \textsc{coc} & \textsc{coc} & 40541.1 & -42.72 & 126.04 \\
46 & \textsc{coc} & \textsc{coc} & 40077.0 & -42.72 & 126.04 \\
47 & \textsc{coc} & \textsc{coc} & 39613.0 & -42.72 & 126.04 \\
48 & \textsc{coc} & \textsc{coc} & 39148.9 & -42.71 & 126.04 \\
49 & \textsc{coc} & \textsc{coc} & 38684.9 & -42.71 & 126.04 \\
50 & \textsc{coc} & \textsc{coc} & 38220.8 & -42.71 & 126.04 \\
51 & \textsc{coc} & \textsc{coc} & 37756.8 & -42.71 & 126.04 \\
52 & \textsc{coc} & \textsc{coc} & 37292.7 & -42.71 & 126.04 \\
53 & \textsc{coc} & \textsc{coc} & 36828.6 & -42.71 & 126.04 \\
54 & \textsc{coc} & \textsc{coc} & 36364.6 & -42.71 & 126.04 \\
55 & \textsc{coc} & \textsc{coc} & 35900.5 & -42.70 & 126.04 \\
56 & \textsc{coc} & \textsc{wl} & 35436.5 & -42.70 & 126.04 \\
57 & \textsc{wl} & \textsc{wl} & 34973.4 & -44.20 & 124.54 \\
58 & \textsc{wl} & \textsc{wl} & 34512.4 & -45.71 & 123.04 \\
59 & \textsc{wl} & \textsc{wl} & 34053.4 & -47.21 & 121.54 \\
60 & \textsc{wl} & \textsc{wl} & 33596.6 & -48.72 & 120.04 \\
61 & \textsc{wl} & \textsc{wl} & 33141.9 & -50.24 & 118.54 \\
62 & \textsc{wl} & \textsc{wl} & 32689.5 & -51.76 & 117.04 \\
63 & \textsc{wl} & \textsc{wl} & 32239.4 & -53.28 & 115.54 \\
64 & \textsc{wl} & \textsc{wl} & 31791.6 & -54.80 & 114.04 \\
65 & \textsc{wl} & \textsc{wl} & 31346.2 & -56.33 & 112.54 \\
66 & \textsc{wl} & \textsc{wl} & 30903.2 & -57.87 & 111.04 \\
67 & \textsc{wl} & \textsc{wl} & 30462.6 & -59.40 & 109.54 \\
68 & \textsc{wl} & \textsc{wl} & 30024.6 & -60.94 & 108.04 \\
69 & \textsc{wl} & \textsc{wl} & 29589.1 & -62.49 & 106.54 \\
70 & \textsc{wl} & \textsc{wl} & 29156.3 & -64.04 & 105.04 \\
71 & \textsc{wl} & \textsc{wl} & 28726.0 & -65.59 & 103.54 \\
72 & \textsc{wl} & \textsc{wl} & 28298.5 & -67.15 & 102.04 \\
73 & \textsc{wl} & \textsc{wl} & 27873.6 & -68.71 & 100.54 \\
74 & \textsc{wl} & \textsc{wl} & 27451.5 & -70.27 & 99.04 \\
75 & \textsc{wl} & \textsc{wl} & 27032.1 & -71.84 & 97.54 \\
76 & \textsc{wl} & \textsc{wl} & 26615.5 & -73.41 & 96.04 \\
77 & \textsc{wl} & \textsc{wl} & 26201.8 & -74.99 & 94.54 \\
78 & \textsc{wl} & \textsc{wl} & 25790.9 & -76.57 & 93.04 \\
79 & \textsc{wl} & \textsc{wl} & 25382.9 & -78.16 & 91.54 \\
80 & \textsc{wl} & \textsc{wl} & 24977.8 & -79.75 & 90.04 \\
81 & \textsc{wl} & \textsc{wl} & 24575.6 & -81.34 & 88.54 \\
82 & \textsc{wl} & \textsc{wl} & 24176.4 & -82.94 & 87.04 \\
83 & \textsc{wl} & \textsc{wl} & 23780.1 & -84.54 & 85.54 \\
84 & \textsc{wl} & \textsc{wl} & 23386.7 & -86.14 & 84.04 \\
85 & \textsc{wl} & \textsc{wl} & 22996.4 & -87.75 & 82.54 \\
86 & \textsc{wl} & \textsc{wl} & 22609.0 & -89.37 & 81.04 \\
87 & \textsc{wl} & \textsc{wl} & 22224.6 & -90.99 & 79.54 \\
88 & \textsc{wl} & \textsc{wl} & 21843.3 & -92.61 & 78.04 \\
89 & \textsc{wl} & \textsc{wl} & 21464.9 & -94.24 & 76.54 \\
90 & \textsc{wl} & \textsc{wl} & 21089.5 & -95.87 & 75.04 \\
91 & \textsc{wl} & \textsc{wl} & 20717.1 & -97.50 & 73.54 \\
92 & \textsc{wl} & \textsc{wl} & 20347.8 & -99.14 & 72.04 \\
93 & \textsc{wl} & \textsc{wl} & 19981.4 & -100.78 & 70.54 \\
94 & \textsc{wl} & \textsc{wl} & 19618.0 & -102.42 & 69.04 \\
95 & \textsc{wl} & \textsc{wl} & 19257.5 & -104.07 & 67.54 \\
96 & \textsc{wl} & \textsc{wl} & 18900.0 & -105.72 & 66.04 \\
97 & \textsc{wl} & \textsc{wl} & 18545.4 & -107.38 & 64.54 \\
98 & \textsc{wl} & \textsc{wl} & 18193.8 & -109.04 & 63.04 \\
99 & \textsc{wl} & \textsc{wl} & 17845.0 & -110.70 & 61.54 \\
100 & \textsc{wl} & \textsc{wl} & 17499.1 & -112.37 & 60.04 \\
101 & \textsc{wl} & \textsc{wl} & 17156.0 & -114.04 & 58.54 \\
102 & \textsc{wl} & \textsc{wl} & 16815.7 & -115.71 & 57.04 \\
103 & \textsc{wl} & \textsc{wl} & 16478.2 & -117.38 & 55.54 \\
104 & \textsc{wl} & \textsc{wl} & 16143.4 & -119.06 & 54.04 \\
105 & \textsc{wl} & \textsc{wl} & 15811.3 & -120.74 & 52.54 \\
106 & \textsc{wl} & \textsc{wl} & 15481.8 & -122.42 & 51.04 \\
107 & \textsc{wl} & \textsc{wl} & 15155.0 & -124.10 & 49.54 \\
108 & \textsc{wl} & \textsc{wl} & 14830.7 & -125.78 & 48.04 \\
109 & \textsc{wl} & \textsc{wl} & 14508.9 & -127.47 & 46.54 \\
110 & \textsc{wl} & \textsc{wl} & 14189.6 & -129.16 & 45.04 \\
111 & \textsc{wl} & \textsc{wl} & 13872.7 & -130.84 & 43.54 \\
112 & \textsc{wl} & \textsc{wl} & 13558.1 & -132.53 & 42.04 \\
113 & \textsc{wl} & \textsc{wl} & 13245.8 & -134.22 & 40.54 \\
114 & \textsc{wl} & \textsc{wl} & 12935.8 & -135.91 & 39.04 \\
115 & \textsc{wl} & \textsc{wl} & 12627.9 & -137.60 & 37.54 \\
116 & \textsc{wl} & \textsc{wl} & 12322.1 & -139.29 & 36.04 \\
117 & \textsc{wl} & \textsc{wl} & 12018.3 & -140.98 & 34.54 \\
118 & \textsc{wl} & \textsc{wl} & 11716.4 & -142.67 & 33.04 \\
119 & \textsc{wl} & \textsc{wl} & 11416.5 & -144.35 & 31.54 \\
120 & \textsc{wl} & \textsc{wl} & 11118.4 & -146.03 & 30.04 \\
121 & \textsc{wl} & \textsc{wl} & 10822.0 & -147.71 & 28.54 \\
122 & \textsc{wl} & \textsc{wl} & 10527.2 & -149.39 & 27.04 \\
123 & \textsc{wl} & \textsc{wl} & 10234.1 & -151.06 & 25.54 \\
124 & \textsc{wl} & \textsc{wl} & 9942.4 & -152.73 & 24.04 \\
125 & \textsc{wl} & \textsc{wl} & 9652.1 & -154.39 & 22.54 \\
126 & \textsc{wl} & \textsc{wl} & 9363.2 & -156.05 & 21.04 \\
127 & \textsc{wl} & \textsc{wl} & 9075.5 & -157.70 & 19.54 \\
128 & \textsc{wl} & \textsc{wl} & 8789.0 & -159.34 & 18.04 \\
129 & \textsc{wl} & \textsc{wl} & 8503.5 & -160.98 & 16.54 \\
130 & \textsc{wl} & \textsc{wl} & 8219.1 & -162.60 & 15.04 \\
131 & \textsc{wl} & \textsc{wl} & 7935.5 & -164.22 & 13.54 \\
132 & \textsc{wl} & \textsc{wl} & 7652.8 & -165.82 & 12.04 \\
133 & \textsc{wl} & \textsc{wl} & 7370.8 & -167.40 & 10.54 \\
134 & \textsc{wl} & \textsc{wl} & 7089.4 & -168.98 & 9.04 \\
135 & \textsc{wl} & \textsc{wl} & 6808.6 & -170.53 & 7.54 \\
136 & \textsc{wl} & \textsc{wl} & 6528.3 & -172.06 & 6.04 \\
137 & \textsc{wl} & \textsc{wl} & 6248.4 & -173.57 & 4.54 \\
138 & \textsc{wl} & \textsc{wl} & 5968.8 & -175.05 & 3.04 \\
139 & \textsc{wl} & \textsc{wl} & 5689.4 & -176.50 & 1.54 \\
140 & \textsc{wl} & \textsc{wl} & 5410.3 & -177.92 & 0.04 \\
141 & \textsc{wl} & \textsc{wl} & 5131.3 & -179.29 & -1.46 \\
\rowcolor{Red}
142 & \textsc{wl} & \textsc{sr} & 4852.3 & 179.39 & -2.96 \\
143 & \textsc{sr} & \textsc{sr} & 4573.4 & -177.43 & 0.04 \\
144 & \textsc{sr} & \textsc{sr} & 4294.2 & -174.31 & 3.04 \\
145 & \textsc{sr} & \textsc{sr} & 4014.5 & -171.25 & 6.04 \\
146 & \textsc{sr} & \textsc{sr} & 3733.9 & -168.27 & 9.04 \\
147 & \textsc{sr} & \textsc{sr} & 3452.1 & -165.39 & 12.04 \\
148 & \textsc{sr} & \textsc{sr} & 3168.9 & -162.63 & 15.04 \\
149 & \textsc{sr} & \textsc{sr} & 2884.1 & -160.02 & 18.04 \\
150 & \textsc{sr} & \textsc{sr} & 2597.5 & -157.63 & 21.04 \\
151 & \textsc{sr} & \textsc{sr} & 2309.1 & -155.52 & 24.04 \\
152 & \textsc{sr} & \textsc{sr} & 2019.2 & -153.81 & 27.04 \\
153 & \textsc{sr} & \textsc{sr} & 1728.2 & -152.71 & 30.04 \\
154 & \textsc{sr} & \textsc{sr} & 1437.5 & -152.58 & 33.04 \\
155 & \textsc{sr} & \textsc{sr} & 1149.9 & -154.16 & 36.04 \\
156 & \textsc{sr} & \textsc{sr} & 872.7 & -159.06 & 39.04 \\
157 & \textsc{sr} & \textsc{sr} & 626.1 & -171.19 & 42.04 \\
158 & \textsc{sr} & \textsc{sr} & 470.9 & 162.06 & 45.04 \\
\bottomrule
\end{longtable}
}
\end{center}
\fi

\end{document}